\newcommand{\esp}{\mathbb{E}}
\newcommand{\prob}{\mathbb{P}}
\newcommand{\probn}{\mathbf{P}}
\newcommand{\var}{\mathbb{V}}
\newcommand{\alg}{\mathcal{F}}
\newcommand{\re}{\mathbb{R}}
\DeclareMathOperator*{\gap}{gap}
\DeclareMathOperator*{\diag}{diag}
\newcommand{\vertiii}[1]{{\left\vert\kern-0.4ex #1 
\kern-0.4ex\right\vert}}
\newcommand{\Lpnorm}[1]{\vertiii{\,#1\,}_{p}}
\newcommand{\Lpcnorm}[1]{\vertiii{\,#1\,}_{\mathfrak{p}}}
\newcommand{\dist}{\mathsf{d}}
\newcommand{\diam}{\mathsf{diam}}
\DeclareMathOperator*{\argmin}{argmin}
\newcommand{\cvar}{\mathsf{CV@R}}
\newcommand{\ignore}[1]{}
\newcommand{\cbdg}{{\bf c}_{\sf bdg}}
\newcommand{\stheta}{\vartheta}
\newcommand{\parametersgoodness}{(\sigma_*^2,\sigma^2,\alpha,\rho)}
\newcommand{\parametersgreatness}{(\sigma_*^2,\sigma^2,\alpha,p,\kappa_p)}
\newcommand{\parametersgreatnessq}{(\sigma_*^2,\sigma^2,\alpha,q,\kappa_q)}
\newcommand{\parametersgoodnessplus}{(\sigma_*^2,\sigma^2(\stheta,\delta),\alpha,\rho)}
\newcommand{\parametersgreatnessplus}{(\sigma_*^2,\sigma^2(\stheta,\delta),\alpha,p,\kappa_p)}
\newtheorem{assumption}{Assumption}
\def\bx{\mathbf x}
\begin{document}

\title{Sample average approximation with heavier tails II%\thanks{Grants or other notes
%about the article that should go on the front page should be
%placed here. General acknowledgments should be placed at the end of the article.}
}
\subtitle{Localization in stochastic convex optimization and persistence results for the Lasso}

%\titlerunning{Short form of title}        % if too long for running head

\author{Roberto I. Oliveira         \and
        Philip Thompson %etc.
}

%\authorrunning{Short form of author list} % if too long for running head

\institute{Roberto I. Oliveira \at
              Instituto de Matem\'atica Pura e Aplicada (IMPA), Rio de Janeiro, RJ, Brazil. \\
              %Tel.: +123-45-678910\\
              %Fax: +123-45-678910\\
              \email{rimfo@impa.br}           %  \\
%             \emph{Present address:} of F. Author  %  if needed
           \and
           Philip Thompson \at
           Purdue University, Krannert School of Management, West Lafayette, USA. \\
           FGV EMAp, School of Applied Mathematics, Rio de Janeiro, Brazil. \\
           \email{philipthomp@gmail.com}
}
\date{Received: date / Accepted: date}
% The correct dates will be entered by the editor

\maketitle

\begin{abstract}{
``Localization'' has proven to be a valuable tool in the Statistical Learning literature as it allows sharp risk bounds in terms of the problem geometry. Localized bounds seem to be much less exploited in the Stochastic Optimization literature. In addition, there is an obvious interest in  both communities in obtaining risk bounds that require weak moment assumptions or ``heavier-tails''. In this work we use a localization toolbox to derive risk bounds in two specific applications. The first is in portfolio risk minimization with conditional value-at-risk constraints. We consider a setting where, among all assets with high returns, there is a portion of dimension $g$, unknown to the investor, that has  significant less risk than the other remaining portion. Our rates for the SAA problem show that ``risk inflation'', caused by a multiplicative factor, affects the statistical rate only via a term proportional to $g$. As the ``normalized risk'' increases, the  contribution in the rate from the extrinsic dimension diminishes while  the dependence on $g$ is kept fixed. Localization is a key tool to show this property. As a second application of our localization toolbox, we obtain sharp oracle inequalities for least-squares estimators with a Lasso-type constraint under weak moment assumptions. One main consequence of these inequalities is to obtain \emph{persistence}, as posed by Greenshtein and Ritov, with covariates having heavier tails. This 
gives improvements in prior work of Bartlett, Mendelson and Neeman.
}
%\PACS{PACS code1 \and PACS code2 \and more}
\subclass{90C15 \and 90C31 \and 60E15 \and 60F10}
\end{abstract}

\section{Introduction}

One fundamental problem in Stochastic Programming is to understand the behavior of {\em sample average approximations}  \cite{roemisch2003,shapiro:dent:rus}.  Suppose we are given an optimization problem:
\begin{eqnarray}
f^*:=\min_{x\in Y} &&\quad f_0(x)\nonumber\\
\mbox{s.t.}&&\quad f_i(x)\le0,\quad\quad\forall i\in\mathcal{I},\label{problem:min}
\end{eqnarray}
with $Y\subset\re^d$ and a nonempty \emph{feasible set} 
\begin{eqnarray}
X:=\left\{x\in Y:f_i(x)\le0, \forall i\in\mathcal{I}\right\}.
\label{equation:feasible:set}
\end{eqnarray}
Additionally, assume each of the functions $f_i$ is given by an expectation
\begin{align}
f_i(x) = \probn\,F_i(x,\cdot) := \int_{\Xi}\,F_i(x,\xi)\,\probn(d\xi)\label{eq:mean:fi}
\end{align}
where $\probn$ is a probability measure over a set 
$\Xi$ and the $F_i:Y\times \Xi\to \re$ are Carath\'{e}odory functions. 

\paragraph{Sample average approximation.} In typical settings, the measure $\probn$ and the functions $f_i$ are not directly accessible but a {\em random sample} $\{\xi_k\}_{k=1}^N$ from $\probn$ is available. If that is the case, it is natural to consider the sample-average approximation (or SAA) to (\ref{problem:min}), where the $f_i$ are replaced by sample averages:
\begin{align}
\widehat{F}_i(x):= \widehat{\probn}\,F_i(x,\cdot) = \frac{1}{N}\sum_{k=1}^NF_i(x,\xi_k).\label{eq:mean:hat:fi}
\end{align}
The following natural questions have been considered in numerous works in stochastic optimization:
\begin{enumerate}
\item Are (nearly) optimal solutions to the SAA also nearly feasible and nearly optimal for the original problem (\ref{problem:min})?
\item Are the values of the two problems typically close?
\end{enumerate}

One approach to answer Questions 1 and 2 above is \emph{asymptotic} in nature: the sample size $N$ diverges whereas the functions $F_i$, $f_i$, the set $Y$ and the measure $\probn$ remain fix. These type of results are important as they justify consistency and the construction of validation procedures. Numerous works from the optimization literature have undertaken this program, e.g.,  \cite{artstein:wets,dupacova:wets,king:rockafellar,king:wets,%
pflug1995,pflug1999,pflug2003,shapiro1989,shapiro1991,shapiro2003,shapiro:dent:rus}. See \cite{shapiro:dent:rus,tito:bayraksan,pasupathy} for extensive reviews. 

Alternatively, \emph{non-asymptotic} analysis, the main focus of this work, consists in obtaining bounds for the value and quality of SAA solutions with explicit dependence on the sample size $N$ and other problem parameters. For instance, letting $f^*$ and $\widehat{F}^*$ be the values of the original problem (\ref{problem:min}) and its SAA (respectively), a recent non-asymptotic result by Guigues, Juditsky and Nemirovski \cite{guigues:juditsky:nemirovski} gives guarantees of the form:
 \begin{equation}\label{eq:guarantees}\forall t\geq 0\,:\,\prob\left\{|\widehat{F}^* - f^*|\leq \frac{A + B\sqrt{t}}{\sqrt{N}}\right\} \geq 1 - e^{-t},\end{equation}
 where $A$ and $B$ do not depend on $N$ or $t$ (but do depend on other problem parameters). Guarantees of this kind are called ``sub-Gaussian" because they imply that the tail decay $\sqrt{N}|\widehat{F}^* - f^*|$ roughly matches that of a Gaussian distribution with standard deviation $B$. 
 
In a nutshell, the goal of the companion paper \cite{2020oliveira:thompsonI} was to obtain a more precise understanding of the SAA in finite-sample settings, with a focus on high-dimensional problems. In particular, \cite{2020oliveira:thompsonI} explores how the nonasymptotic properties of a SAA depend on the structure of the ideal optimization problem (\ref{problem:min}), including its dimension, shape of feasible set and growth of the objective function. In the context of equation (\ref{eq:guarantees}), this means understanding how the constants $A$ and $B$ actually depend on the SAA. We note that \cite{2020oliveira:thompsonI} focus on problems where the functions $F_i$ are continuous; this excludes chance constraints, which typically require specific techniques.
 
 Some of the technical contributions of \cite{2020oliveira:thompsonI} include making weak moment guarantees on the data (in particular, finite moment assumptions are all we need) and using better ``chaining"~methods in the analysis of the fluctuations of the SAA. Additionally, in the favorable setting where the ideal problem is convex; a Slater-type condition is satisfied; and the objective function satisfies a growth condition, the error bounds of the problem depend only on the local geometry of the feasible set around the optimum point. As explained in \cite{2020oliveira:thompsonI}, this is a manifestation of the same ``localization"~phenomenon that has been explored in the Statistical Learning literature \cite{bartlett2005,bartlett2006,koltchinskii2006,2011koltchinskii}. In particular, there are cases where this phenomenon may lead to faster-than-$(1/\sqrt{N})$ decay of the errors or other favorable properties. Theorem \ref{thm:convexrandomset} in the present paper is a restatement of this result from \cite{2020oliveira:thompsonI}.

\subsection{Our contribution}
 
 Applying the general method from \cite{2020oliveira:thompsonI} to convex problems is not at all simple, and requires detailed calculations in each specific case. For instance, in \cite{2020oliveira:thompsonI}, we apply our general localized bounds to the case when the problem is locally strong convex or weak sharp. See Propositions 3 and 4 in this paper.

 The goal of the present paper is to work out two particular applications in detail. 
 In the first one, presented in \S \ref{sub:intro:CVaR}, we apply the results of \cite{2020oliveira:thompsonI} directly to a problem with linear objective function and a constraint on the conditional value-at-risk. 
 
  The second, more challenging application considers the LASSO estimator in a setting where weak moment  assumptions are made on the data. In this application, direct application of Theorem \ref{thm:convexrandomset} is not sufficient; instead, we resort to the "localization toolbox"~that was developed in \cite{2020oliveira:thompsonI} to obtain a sharper result. Our LASSO result is  discussed in \S \ref{sub:intro:LASSO}.
 
\subsection{A problem with risk constraints}\label{sub:intro:CVaR}

This example is a caricature of a portfolio maximization problem where there is a constraint on the conditioned value-at-risk $\cvar$ of the losses. It turns out that the localization theory in \cite{2020oliveira:thompsonI}, as encapsulated in Theorem \ref{thm:convexrandomset} below, has a crucial impact in this application. 

Here, \[\xi = (\xi[1],\dots,\xi[d_0])^T\] is a random vector whose coordinates correspond to losses of $d_0$ distinct financial assets. If $x=(x[1],\dots,x[d_0])^T$ is a vector whose coordinates describe the fractions of the initial capital invested in assets $1,\ldots,d_0$, then the total loss is proportional to $\langle x,\xi\rangle$. We wish to minimize the expectation of $\langle x,\xi\rangle$ subject to a constraint on the conditional value-at-risk of the solution \cite{rockafellar:urysaev2000,guigues:juditsky:nemirovski}. In this problem, the case of light-tailed $\xi$ would be of little interest. In Section \ref{sec:cvar}, we describe a specific Assumption \ref{assump:saacvar} that allows heavier tails. In principle, the most general problem would associate ``high risk'' to every asset having high returns. Still, it makes sense to envision a case in which, among these assets, there is a portion of them which might have significant more risk than the other portion having, say, $g$ assets. This is the content of Assumption \ref{assump:saacvar} in Section \ref{sec:cvar}. 

Under Assumption \ref{assump:saacvar}, we obtain statistical rates for the SAA problem showing that ``risk inflation'', caused by a multiplicative factor, affects the statistical rate only via a term proportional to $g$. As the ``normalized risk'' increases, the  contribution from the extrinsic dimension $d_0$ in the rate diminishes while  the dependence on $g$ is kept fixed. Localization is the key tool to show this type of property.

\subsection{The LASSO estimator}\label{sub:intro:LASSO}

The ordinary least squares estimator is known to suffer from consistency problems in high dimensions (for instance, it is an underdetermined optimization problem). The LASSO estimator, introduced by Tibshirani \cite{tibshirani1996}, circumvents this difficulty via a $\ell^1$ restriction of the feasible set, or via $\ell^1$ penalization of potential solutions. 

The LASSO has generated a huge amount of practical as well as theoretical interest. One perspective on it is that it promotes sparsity of solutions. Assume that the data for the problem takes the form:
\[\xi=(y,\bx)\in\re\times \re^d;\]
here $y$ should be thought of as a response variable, and $\bx$ as a set of covariates. Assume further that $y$ can be written as the sum of $\langle x_*,x\rangle$ (with $x_*\in\re^p$ unknown) and a mean-$0$ noise term with variance $\sigma^2$. Much work on the LASSO shows results of the following form: if the vector $x_*$ is sparse, then the LASSO solution approximates it well \cite{2007bunea:tsybakov:wegkamp}. Moreover, generally, the LASSO solution  $\widehat{x}$ satisfies an ``oracle inequality with respect to sparse predictors." Concretely, this means that the penalized estimator satisfies inequalities of the following general form \cite{2009bickel:ritov:tsybakov}:
\[\|\mathbf{\widehat\Sigma}^{1/2}(\widehat{x}-x_*)\|\leq C\,\min_{x\in\re^p}\left(\|\mathbf{\widehat\Sigma}^{1/2}(x-x^*)\| + \sigma^2\mu^2(x^*)\log d\frac{\|x\|_0}{N}\right),\]
with $\mathbf{\widehat\Sigma}$ the ``design matrix" determined by sample points and $C\ge1$ a constant. In above, $\mu(x^*)$ is the ``restricted eigenvalue constant'' depending on $x^*$. Results of this form are now numerous, but they are somewhat restricted in application: they usually require light-tailedness of $\xi$, as well as strong assumptions on the design matrix, such as the Restricted Eigenvalue Property introduced in \cite{2009bickel:ritov:tsybakov}. More details about this line of research are given in Remark \ref{rem:oracleLASSO} below.

A separate line of research, which we pursue here, makes no assumptions on the design matrix, and obtains results of a different kind. Greenshtein and Ritov \cite{2004greenshtein:ritov} introduced the notion of \emph{persistence} of an estimator. While their main result is asymptotic, one may summarize it in a nonasymptotic fashion. Given $R>0$, let $\widehat{x}_R$ denote a solution to the optimization problem:
\[\min_{x\in\re^d,\,\|x\|_1\leq R}\,\widehat{\probn}\,(y-\langle \bx,x\rangle)^2.\]
The main problem of \cite{2004greenshtein:ritov} was the following: how large can one take $R$ in terms of other problem parameters so that $\probn\,(y-\langle \bx,\widehat{x}_R\rangle)^2$ -- the ``population risk"~associated with $\widehat{x}_R$ -- is close to the smallest possible risk $\inf_{\|x\|_1\in R}\probn\,(y-\langle\mathbf{x},x\rangle)^2$ that is possible under the same $\ell^1$ norm constraint? Note that this formulation of the problem makes sense even in the absence of a ``ground truth"~$x_*$; in Statistical Learning lingo, one wishes $\widehat{x}_R$ to have nearly the same risk as the best $x\in\re^d$ satisfying $\|x\|_1\leq R$. 

Both \cite{2004greenshtein:ritov} and the subsequent paper \cite{2006greenshtein} obtain results under relatively strong assumptions on the data that we do not reproduce here. 
Bartlett, Mendelson and Neeman  \cite{2012bartlett:mendelson:neeman} significantly improve earlier results by obtaining stronger, nonasymptotic results. In precise terms, their result is as follows (Theorem 4.5 in \cite{2012bartlett:mendelson:neeman}). Suppose $(y,\bx)\in\re\times\re^d$ is such that each coordinate $\bx_j$ of $\bx$ is \emph{subexponential}. Let $h:=\log^{3/2} N\log^{3/2}(N d)$ and set $\rho=\max_{j\in[d]}\Vert \bx_j\Vert_{\psi_1}$, where $\Vert\cdot\Vert_{\psi_1}$ denotes the subexponential Orlicz norm. Then, for some constant $C>0$ and for any $\delta\in(0,1)$, the constrained Lasso estimator $\widehat{x}_R$ satisfies, with probability $\geq 1-\delta$:
\begin{eqnarray}\label{eq:LASSOBMN} & \probn\,(y-\langle \bx,\widehat{x}_R\rangle)^2  -  \inf_{\|x\|_1\in R}\probn\,(y-\langle\bx,x\rangle)^2
\\ \nonumber \leq & \frac{C}{\delta^2}\left\{\sqrt{\inf_{\|x\|_1\in R}\probn\,(y-\langle\bx,x\rangle)^2}\,\frac{Rh\rho}{\sqrt{N}} + \frac{R^2h^2\rho^2}{N}\right\}.\end{eqnarray}
As a corollary, the right hand side (RHS) is typically small whenever $R\ll \sqrt{N}/\log(dN)^c$, where $c>0$ is a positive constant.  The authors also discuss in \cite{2012bartlett:mendelson:neeman} the application of their results to the problem of convex aggregation of least-squares estimators with weights constrained in the $\ell_1$-ball. We note that in this result the ``slow rate'' $1/\sqrt{N}$ can be achieved without extra assumptions. As noted above, the oracle inequalities described above achieve faster $1/N$ rates, at the cost of stronger assumptions on the design matrix. For a more in depth discussion on this topic, we refer to \cite{2012lecue:mendelson} and references therein.

One main contribution of this paper is to obtain an improved consistency result in the line of (\ref{eq:LASSOBMN}). Specifically, we consider a small variant of the LASSO, and prove an improved version of the above inequality, where only finite moment assumptions on the covariates are needed. In fact, even in the subexponential setting, our inequality improves the $\log N$ and $\log d$ factors in \eqref{eq:LASSOBMN}. Most importantly, we also obtain a logarithmic dependency on $1/\delta$. As we will see, our proof strategy is distinct than the one used in \cite{2012bartlett:mendelson:neeman}. The localization arguments in  \cite{2012bartlett:mendelson:neeman}  makes use of the ``isomorphic method'' initiated in \cite{bartlett2006}. In this work, we use the localization toolbox developed in our companion paper \cite{2020oliveira:thompsonI} as a building block of the proof.  See Proposition \ref{lem:convexlocalized} in Section \ref{sec:deviationlocalization}.

\begin{remark}\label{rem:oracleLASSO}Because the literature on the LASSO is so extensive, we give here a few additional pointers to it that are not directly related to this work. Several oracle inequalities for LASSO have been established  \cite{2007bunea:tsybakov:wegkamp,2009bickel:ritov:tsybakov,2011koltchinskii:lounici:tsybakov,2018bellec:lecue:tsybakov}; this includes results where the LASSO is applying for model selection in nonparametric regression or, as an aggregator of estimators. See also \cite{2004bunea:tsybakov:wegkamp,2006bunea:tsybakov:wegkamp,2007bunea:tsybakov:wegkamp-aggregation:gaussian,2007bunea:tsybakov:wegkamp-density,2008geer}. Bickel, Ritov and Tsybakov \cite{2009bickel:ritov:tsybakov} improve on the oracle inequalities of Bunea, Tsybakov and Wegkamp \cite{2007bunea:tsybakov:wegkamp} obtaining fast rates for LASSO under the Restricted Eigenvalue condition, the weakest known to date. The first sharp oracle inequalities for LASSO were obtained in \cite{2011koltchinskii:lounici:tsybakov}. In \cite{2018bellec:lecue:tsybakov}, oracle inequalities for LASSO and Slope regularization are obtained showing that the tuning parameter can be taken independent of the confidence level and achieving the minimax rate (improving on log terms). We also refer to \cite{2007candes:tao,2009koltchinskii-dantzig,2009bickel:ritov:tsybakov} for oracle inequalities of a related estimator, the so called Dantzig selector \cite{2007candes:tao}, which minimizes the 
$\ell_1$-norm subjected to an approximate KKT condition of the empirical squared loss. Oracle inequalities using entropy or $\ell_p$ norms (with $p$ close to 1) were studied in \cite{2009koltchinskii,2009koltchinskii-dantzig,2009koltchinskii-entropy}. Regarding LASSO, see also \cite{2006zhang:huang,%
2009zhang} for oracle inequalities related to the even more challenging problem of \emph{variable selection} \cite{2007meinshausen:yu,%
2007meinshausen,2007meinshausen:buhlmann,%
2006zhao:yu,2006zou,2006leng:lin:wahba,2008lounici}, that is, checking if the ground truth and the estimator have the same zero coordinates.\end{remark}

\subsection{Organization}

The remainder of the paper is organized as follows. Section \ref{section:notation} presents some preliminaries, including notation. Section \ref{sec:basic} recalls some of the assumptions, definitions and results from the companion paper \cite{2020oliveira:thompsonI}. The example of the conditioned-value-at-risk is presented in section \ref{sec:cvar}, and section \ref{subsection:lasso} discusses the LASSO example. An appendix contains proofs of some additional technical results.

\section{Preliminaries}\label{section:notation}

\subsection{Basic notation}  

Given a set $S$, we denote its (potentially infinite) cardinality by $|S|$. The complement of an event $E$ in a probability space is $E^c$. For $m\in\mathbb{N}$, we write $[m]:=\{1,\ldots,m\}$. 

Elements of $\re^d$ are column vectors. Given $x\in\re^d$, its coordinates are denoted by $x[i]$, $1\leq i\leq d$. A superscript $T$ is used to denote transposition of a vector, so $x\in\re^d$ is given by $(x[1],\dots,x[d])^T$. The inner product of $x,y\in\re^d$ is denoted by $\langle x,y\rangle$ or $x^Ty$. Norms are denoted by $\|\cdot\|$ and the unit ball around $0$ in that norm is $\mathbb{B}$. Given $a\in\re$, $a_+:=\max\{a,0\}$. Given positive numbers $x$ and $y$, $x\lesssim y$ means that $x\le Cy$ for an absolute constant $C>0$.

Let $(\mathcal{M},\dist)$ be a metric space. We let $\diam(A)$ denote the (potentially infinite) diameter of $A\subset \mathcal{M}$. Given $x\in \mathcal{M}$ and $A\subset \mathcal{M}$ nonempty, $\dist(x,A):=\inf_{a\in \mathcal{A}}\dist(x,a)$.

We fix from now on a probability space $(\Omega,\cal{A},\prob)$ and assume all random variables we consider are defined on it. Given a random variable $Z$, we let $\esp[Z]$ denote its mean, $\var[Z]$ denote its variance and $\Lpnorm{Z}:= (\esp[|Z|^p])^{1/p}$ denotes $L^p$ norm (for $p\geq 1$). 

\subsection{Complexity parameters for sets} 

We review in this section some definitions and results about ``generic chaining". Talagrand's book \cite{talagrand2014} is the best reference for these concepts. 

The ``generic chaining"~functional of a metric space $(\mathcal{M},\dist)$ is a measure of the ``complexity"~of discretizing $\mathcal{M}$ at different scales. To define it, we need the following concept. A sequence $\{\mathcal{A}_j\}_{j=0}^{+\infty}$ is {\em admissible} if each $\mathcal{A}_j$ is a partition of $\mathcal{M}$, with $|\mathcal{A}_0|=1$ and $|\mathcal{A}_j|\leq 2^{2^j}$ for each $j\geq 1$. For each $j$, we let $\diam(\mathcal{A}_j)$ to denote the largest diameter of a set in partition $\mathcal{A}_j$. Talagrand's $\gamma_2$-functional is then defined by
\begin{equation}\label{eq:defgamma2}\gamma_{2}^{(\alpha)}(\mathcal{M},\dist):=\inf\limits_{\{\mathcal{A}_j\}_{j}\text{ admissible}}\left\{\,\sum_{j\geq 0}2^{\frac{j}{2}}\diam(\mathcal{A}_j)^{\alpha}\right\}.
\end{equation}

Talagrand's celebrated majorizing measures theorem \cite{talagrand1994,talagrand2014} shows that:
\begin{equation}\label{eq:talagrand}c\,\esp\left[\sup_{x\in \mathcal{M}}|Y_x-Y_{x_0}|\right]\leq \gamma_{2}^{(\alpha)}(\mathcal{M},\dist)\leq C\,\esp\left[\sup_{x\in \mathcal{M}}|Y_x-Y_{x_0}|\right],\end{equation}
with $c,C>0$ universal, when the $Y_x$ are mean-zero Gaussian and $\var[Y_x-Y_{x'}] =\dist(x,x')^{2\alpha}$. 

The functional $\gamma^{(\alpha)}_2(\mathcal{M},\dist)$ is somewhat mysterious, and can be quite difficult to compute. One can upper bound it via Dudley's {\em entropy integral} \cite{talagrand2014}. Recall that an $r$-net in $\mathcal{M}$ is a set $A\subset \mathcal{M}$ such that $\dist(x,A)\leq r$ for all $x\in \mathcal{M}$. The $r$-covering number of $\mathcal{M}$ is the size of the smallest $r$-net. The {\em $r$-entropy number of $\mathcal{M}$}, $\mathsf{H}(\mathcal{M},r)$, is the natural log of the $r$-covering number. It is known that
\begin{equation}\label{eq:entropybound}\gamma^{(\alpha)}_2(\mathcal{M})\leq C\,\int_0^{\diam(\mathcal{M})}\,\sqrt{\mathsf{H}(\mathcal{M},r^{\frac{1}{\alpha}})}\,dr,\end{equation}
with $C>0$ is a universal constant. An important special case is when $\mathcal{M}\subset \re^d$ and $\dist$ is given by a norm, in which case the entropy integral bound is upper bounded by $\diam(\mathcal{M})^{\alpha}\sqrt{d}$ up to a universal constant. In particular, we obtain, 
\begin{equation}\label{eq:simpleboundgamma2}\gamma_{2}^{(\alpha)}(\mathcal{M})\leq C_{\alpha}\sqrt{d}\,\diam(\mathcal{M})^{\alpha}\end{equation}
with $C_{\alpha}>0$ only depends on $\alpha$. However, this bound can be very loose, as the next example shows. 

\begin{example}Let $\mathcal{M}\subset \re^d$ denote the standard simplex in $d$ dimensions, that is, the convex hull of the $d$ canonical basis vectors. Let $\dist$ denote the standard Euclidean metric. In this case, (\ref{eq:simpleboundgamma2}) bounds $\gamma_{2}^{(1)}(\mathcal{M}) = \mathcal{O}(\sqrt{d})$. By contrast, (\ref{eq:talagrand}) shows that $\gamma_{2}^{(\alpha)}(\mathcal{M})$ is of the order of $\sqrt{\log d}$.\end{example}

Finally, we state a proposition that will be useful later. It shows that the generic chaining functional is well-behaved under Cartesian products.  

\begin{proposition}[Proof in the Appendix]\label{prop:sizeproduct}Let $(\mathcal{M}_i,\dist_i)$ be metric spaces for $i=1,2$. Set $\mathcal{M}=\mathcal{M}_1\times \mathcal{M}_2$. Define the metric $\dist$ on $\mathcal{M}$ given as follows: for every 
$(x_1,x_2),(y_1,y_2)\in \mathcal{M}$,
$$
\dist((x_1,x_2),(y_1,y_2)):= \dist_{1}(x_1,y_1) + \dist_{2}(x_2,y_2).
$$
Then 
$
\gamma_2^{(\alpha)}(\mathcal{M},\dist)\leq (\sqrt{2}+1)\,(\gamma_2^{(\alpha)}(\mathcal{M}_1,\dist_1) + \gamma_2^{(\alpha)}(\mathcal{M}_2,\dist_2)).
$
\end{proposition}

\section{Setup and results from the companion paper}\label{sec:basic}

In this section we review the notation, assumptions and results from \cite{2020oliveira:thompsonI} that we need in the present paper.

\subsection{Ideal optimization versus SAA}\label{sub:basic}

\paragraph{Functions and sets.} As in the introduction, $\mathcal{I}$ is a finite set which will index the constraints of our problem. We use $0\not\in\mathcal{I}$ to index the objective function and set $\mathcal{I}_0:=\mathcal{I}\cup \{0\}$. 

We are given a set $Y\subset \re^d$ and functions $f_i:Y \to \re$, for $i\in \mathcal{I}_0$. We will also write $f:=f_0$. Given $\delta\in\re$, we define:
\[X_\delta:= \{x\in Y\,:\, \forall i\in\mathcal{I},\, f_i(x)\leq \delta\}.\]
We also write $X$ instead of $X_0$. Note that $X=X_{\delta}=Y$ for all $\delta>0$ when $\mathcal{I}=\emptyset$. The ``ideal"~optimization problem we consider is: 

\begin{eqnarray}
f^*:=\min_{x\in Y}&\quad & f(x) \label{problem:ideal}\\
\mbox{s.t.}&\quad &f_i(x)\leq 0,\, i\in\mathcal{I}.\nonumber
\end{eqnarray} 
In other words, the feasible set is $X$, the objective function is $f=f_0$ and the value of the problem is $f^*$. We will always assume implicitly that $X\neq \emptyset$. We let 
\[x^*\in{\rm arg\, min}_{x\in X}f(x)\mbox{ so that }f^*:=f(x^*).\]
In particular, we assume implicitly that our problem always has minimizers. We also use the symbols:
\[f^*_\delta:=\inf_{x\in X_\delta}f(x)\mbox{ and }{\rm gap}(\delta):=|f^*_\delta-f^*| \mbox{ (when $X_{\delta}\neq \emptyset$)}.\]
In case the above infimum is attained, we let 
\[x^*_{\delta}\in{\rm arg\, min}_{x\in X_\delta}f(x)\mbox{ so that }f^*_\delta:=f(x^*_\delta).\]

We will need some additional notation. We write $X_{\delta,{\rm act}(i)}$ for the subset of $X_\delta$ where constraint $i$ is active:
\[X_{\delta,{\rm act}(i)}:= \{x\in X_\delta\,:\, f_i(x) = \delta\}.\] We also define the set of points $x\in X_\delta$ that achieve $f(x)\leq f^*+\stheta$:
\[X_{\delta}^{*,\stheta}:= \{x\in X_\delta\,:\, f(x)\leq f^* + \stheta\}.\]
We set
\[X_{\delta}^{*,=\stheta}:= \{x\in X_\delta\,:\, f(x) =f^* + \stheta\}\]
and finally
\[X_{\delta,{\rm act}(i)}^{*,\stheta} := X^{*,\stheta}_\delta\cap X_{\delta,{\rm act(i)}}.\]
We emphasize that $\delta$ will be omitted from our notation when it is equal to zero.

\paragraph{Randomness.} Let $(\Xi,\sigma(\Xi),\probn)$ denote a probability space. We write $\xi\sim \probn$ to denote a random element of $\Xi$ with law $\probn$.  In this paper,$\{\xi_k\}_{k=1}^N\subset \Xi$ is an i.i.d. random sample of size $N$ from the probability measure $\probn$. The $\xi_k$ are defined over a common probability space $(\Omega,\mathcal{A},\prob)$ that will be always kept implicit. $\widehat{\probn}$ denotes the empirical measure of the sample:
\[\widehat{\probn}:= \frac{1}{N}\sum_{k=1}^{N}\delta_{\xi_k}.\]
Given a measurable function $H:Y\times \Xi\to \re$ and $y$, we define:
\[\probn H(y,\cdot):= \int_{\Xi}H(y,\xi)\,\probn(d\xi)\mbox{ and }\widehat{\probn}\,H(y,\cdot) := \frac{1}{N}\sum_{k=1}^{N}H(y,\xi_k)\]
to denote the expectation and sample average (respectively) of $H(y,\cdot)$ with $y$ fixed. Our assumptions will be such that the integral over $\probn$ will always be well defined. 

\paragraph{Sample average approximation.} We are given measurable functions $F_i:Y \times \Xi\to \re$ for $i\in \mathcal{I}_0$. We assume that 
\begin{equation}\label{eq:SAAcondition} \forall i\in \mathcal{I}_0\,\forall x\in Y\,:\, \probn|F_i(x,\cdot)|<+\infty\mbox{ and }\probn F_i(x,\cdot) = f_i(x).\end{equation}
Write:\[\widehat{F}_i(x):=\widehat{\probn}F_i(x,\cdot) = \frac{1}{N}\sum_{k=1}^NF_i(x,\xi_k)\]
to denote the sample average of $F_i$. Formally, $\widehat{F}_i(x)$ is a function of $x$ and the sample, but we omit the sample from our notation. We sometimes write $\widehat{F}:=\widehat{F}_0$. The sample average approximation to problem (\ref{problem:ideal}) is:
\begin{eqnarray}
\widehat{F}^*:=\min_{x\in Y}&\quad & \widehat{F}(x) \label{problem:SAA}\\
\mbox{s.t.}&\quad &\widehat{F}_i(x)\leq \hat\delta,\, i\in\mathcal{I}.\nonumber
\end{eqnarray} 
for some $\hat\delta\in\re$. The most typical situation is 
$\hat\delta=0$ but we consider the general case for completeness. 
 
Intuitively, the $\widehat{F}_i$ should give random approximations to the $f_i$ for large $N$, and optimization problems with the 
$\widehat{F}_i$ should be similar to the ``ideal" ~problems involving the $f_i$. We will need the following analogues of the notation introduced above:
\begin{eqnarray*}\widehat{X}&:=& \{x\in Y\,:\,\forall i\in\mathcal{I},\, \widehat{F}_i(x)\leq \hat\delta\};\\
\widehat{X}^{*,\stheta}&:=& \{x\in \widehat{X}\,:\, \widehat{F}(x)\leq \widehat{F}^* + \stheta\}.\\
\widehat{x}^* & \in & {\rm arg min}_{x\in \widehat X} \widehat{F}(x).\end{eqnarray*}

Again, we implicitly assume that the SAA always has solutions. 

\subsection{Assumptions on the random functions} We now give our {\em probabilistic assumptions} on the random functions $F_i$. We start with a definition.

\begin{definition}[Good and great random variables] Given $(\sigma^2,\rho)\in\re_+$, a function $h:\Xi\to \re_+$ is said to be $(\sigma^2,\rho)$-good if $\probn\,h(\cdot)\leq \sigma^2$ and
\[\prob\left\{\widehat{\probn}h(\cdot)> 2\sigma^2\right\}\leq \rho.\]
Given $\sigma^2>0$, $p\geq 2$ and 
$\kappa_p>0$, we say that $h$ is $(\sigma^2,p,\kappa_p)$-great if $\probn h(\cdot)\leq \sigma^2$ and in addition we have the $L^p$ norm bound:
\[\Lpnorm{h(\xi) - \probn h(\cdot)}\leq \kappa_p\sigma^2.\]\end{definition}
Any {\em fixed} integrable function $h\geq 0$ with $\probn h(\cdot)\leq \sigma^2$ is $(\sigma^2,\rho)$-good when $N$ is large enough due to the Law of Large Numbers. The point of our definition is to have finite-$N$ results. The next proposition says that great random variables satisfy a quantitative form of goodness.

\begin{proposition}[Proof in companion paper \cite{2020oliveira:thompsonI}] \label{prop:goodisgreath}If $h$ as above is 
$(\sigma^2,p,\kappa_p)$-great, it is also $(\sigma^2,\rho)$-good, with
\[\rho:=\left(\cbdg\kappa_p\sqrt{\frac{p}{N}}\right)^p\]
and $\cbdg$ is a universal constant.\end{proposition}

The kind of assumption we will make on the $F_i$ is described below. In what follows, $Z\subset Y$ is a subset of $Y$ containing $x^*$, $\sigma^2,\sigma_*^2,\rho>0$, $\alpha\in (0,1]$, $\kappa_p\geq 1$ and $p\geq 2$. Also, $\Vert\cdot\Vert$ is a norm over $\re^d$.

\begin{assumption}[$\parametersgoodness$-goodness over $Z$]\label{assump:goodFi} The functions $\{f_i\}_{i\in\mathcal{I}_0}$ and $ \{F_i\}_{i\in\mathcal{I}_0}$ are continuous in $x\in Y$. Moreover, 
\begin{enumerate}
\item The maps $\xi\mapsto (F_i(x^*,\xi) - f_i(x^*))^2\in \re_+$ are $(\sigma_*^2,\rho)$-good for each $i\in\mathcal{I}_0$;
\item For each map $F_i$ with $i\in\mathcal{I}_0$, there exists $\mathsf{L}_i:\Xi\to\re$ such that $\mathsf{L}_i^2$ is $(\sigma^2,\rho)$-good and:
\[\forall x,x'\in Z,\,\forall \xi \in\Xi\,:\, |F_i(x,\xi) - F_i(x',\xi)|\leq \mathsf{L}_i(\xi)\,\|x-x'\|^\alpha.\]\end{enumerate}
\end{assumption}

\begin{assumption}[$\parametersgreatness$-greatness over $Z$]\label{assump:greatFi} The functions $\{f_i\}_{i\in\mathcal{I}_0}$ and $ \{F_i\}_{i\in\mathcal{I}_0}$ are continuous in $x\in Y$. Moreover, 
\begin{enumerate}
\item The maps $\xi\mapsto (F_i(x^*,\xi) - f_i(x^*))^2\in \re_+$ are $(\sigma_*^2,p,\kappa_p)$-great for each $i\in\mathcal{I}_0$;
\item For each map $F_i$ with $i\in\mathcal{I}_0$, there exists $\mathsf{L}_i:\Xi\to\re$ such that $\mathsf{L}_i^2$ is $(\sigma^2,p,\kappa_p)$-great and:
\[\forall x,x'\in Z,\,\forall \xi \in\Xi\,:\, |F_i(x,\xi) - F_i(x',\xi)|\leq \mathsf{L}_i(\xi)\,\|x-x'\|^\alpha.\] \end{enumerate}
\end{assumption}

In general,  one may require $Z=Y$. In convex settings, we may take potentially much smaller sets $X^{*,\stheta}_{\delta}\subset X$. Notice that each of the above assumptions implies:
\begin{equation}\label{eq:holdercont}\forall i\in\mathcal{I}_0,\,\forall x,x'\in Z\,:\, |f_i(x) - f_i(x')|\leq (\probn\mathsf{L}(\cdot))\,\|x-x'\|^{\alpha}\leq \sigma \,\|x-x'\|^{\alpha},\end{equation}
that is, the functions $f_i$ are $\alpha$-H\"{o}lder continuous over $Z$. In particular, when $Z=Y$, ou assumptions impose restrictions on the growth of the objective function $f_0$.

We note the following simple consequence of Proposition \ref{prop:goodisgreath}.

\begin{proposition}[Great implies good; Proof in companion paper \cite{2020oliveira:thompsonI}] Assumption \ref{assump:greatFi} implies Assumption \ref{assump:goodFi} with the same set $Z$, the same parameters $\sigma^2,\sigma_*^2$, and 
\[\rho:=\left(\cbdg\kappa_p\sqrt{\frac{p}{N}}\right)^p.\]\end{proposition}

In particular, our assumptions may be satisfied with $\rho$ polynomially small in $N$, even if the random variables involved do not have light tails. 

\subsection{Assumptions on the geometry of the problem}\label{sub:geometry}

When there are constraints in expectation, the SAA will unavoidably have a different feasible set than the ideal problem. In this section, we present a standard assumption that allows us to bound the difference between the two sets. It is  standard in the analysis of perturbations and algorithms for problems in Optimization and Variational Analysis \cite{pang,iusem:jofre:thompson2015}. For convex problems, we consider a localized version of the Slater CQ condition. Here, we only require that the set $X^{*,\stheta}$ be bounded and has an ``interior point".

\begin{assumption}[Localized Slater CQ with convexity (LSCQ)]\label{assump:LSCQ} The set $Y$ is convex and closed, and the functions $\{f_i\}_{i\in\mathcal{I}_0}$ are continuous and convex. Moreover, there exist $\eta_*>0$ and $\stheta_*$ such that $X^{*,\stheta_*}$ is bounded and $X_{-\eta_*}^{*,\stheta_*}\neq \emptyset$ (that is, there exists $x\in Y$ with $f(x)\leq f^*+\stheta_*$ and $f_i(x)\leq -\eta_*$ for all $i\in\mathcal{I}$).
\end{assumption}

Boundedness of $X^{*,\stheta_*}$ may be guaranteed by usual assumptions. In that case, the Slater CQ, i.e., $X_{-\eta_*}\neq\emptyset$ for some 
$\eta_*>0$, implies Assumption \ref{assump:LSCQ} with $\stheta_* \ge \inf_{x\in X_{-\eta_*}}f(x)-f^* = {\rm gap}(-\eta_*)$. Assumption \ref{assump:LSCQ} allows us to control the complexity of $X^{*,\stheta}_\delta$ in terms of $X^{*,\stheta}$, for suitable $\stheta$ and $\delta$; see Lemma 5 in \cite{2020oliveira:thompsonI} for details.

We conclude this section by noting that in the next Section \ref{ss:SAA:convex:loc}, the functional $\gamma_2^{(\alpha)}$ is defined with respect to $\dist$, i.e., the set-to-point distance associated to the norm $\Vert\cdot\Vert$ over $\re^d$. See Assumptions \ref{assump:goodFi} and \ref{assump:greatFi}. 

\subsection{Deviation and localization arguments under convexity}\label{sec:deviationlocalization}

This section presents a general deterministic perturbation result which is useful in analyzing how the SAA problem differs from the ideal one. In Proposition \ref{lem:convexlocalized} below, lower and upper tails are distinguished. For \emph{estimation}, the fact that lower bounds hold under much weaker tail assumptions were already exploited in the literature \cite{lecue:mendelson2017,oliveira2013,oliveira2016}. Our \emph{persistency} results for Lasso in Section \ref{subsection:lasso} also makes use of this fact. Proposition \ref{lem:convexlocalized} below is one fundamental ingredient.

\begin{remark}
The results in this section are purely deterministic in the sense that we do not need \eqref{eq:mean:fi} and \eqref{eq:mean:hat:fi} to hold. We simply need to assume that $Y$ is as given in Section \ref{sec:basic}; that $f_i,\widehat{F}_i:Y\to \re$ are functions (with $i\in\mathcal{I}_0$) and that $X$, $\widehat{X}$, etc are defined in terms of the $f_i$ and $\widehat{F}_i$ as prescribed in Section \ref{sec:basic}. Our results will be the most interesting when the $\widehat{F}_i$ are good approximations to the respective $f_i$. Proposition \ref{lem:convexlocalized} below may be useful in other applications where the perturbation of convex optimization problems is of interest. 
\end{remark}

For convenience, we introduce the following notation. Given $x,y\in Y$, $Z\subset Y$ and $i\in\mathcal{I}_0$:
\begin{eqnarray}\label{eq:defDeltaone}\widehat{\Delta}_i(x)&:=&\widehat{F}_i(x) - f_i(x);\\ 
\label{eq:defdeltatwo}\widehat{\Delta}_i(y;x)&:=& \widehat{\Delta}_i(y)  - \widehat{\Delta}_i(x).\end{eqnarray}
%;\\
%\widehat{\Delta}_i(Z;x^*) &:=& \sup_{y\in Z}\widehat{\Delta}_i(y,x^*);\\
%\widehat{\Delta}_{\mathcal{I}}(Z;x^*) &:=&  \max_{i\in\mathcal{I}}\sup_{y\in Z}\widehat{\Delta}_i(y,x^*)\end{eqnarray*}

\begin{proposition}[Proof in companion paper \cite{2020oliveira:thompsonI}]\label{lem:convexlocalized}Assume that $Y$ is convex and closed and that the functions $\{f_i\}_{i\in\mathcal{I}_0}$ and $\{\widehat{F}_i\}_{i\in\mathcal{I}_0}$ are all convex and continuous. Given 
${\delta^{\circ}},\delta>0$, assume $x_{-\delta^{\circ}}\in X_{-\delta^{\circ}}$. Fix $\epsilon\geq f(x_{-\delta^{\circ}})-f^*$ and $\epsilon_0>0$. Define 
$
\hat\epsilon:=\widehat{F}(x_{-\delta^\circ}) - \widehat{F}^*+\epsilon_0.
$ 
If the following three conditions hold:
\begin{eqnarray}\label{eq:controlconstraintsinterior} %\forall i\in\mathcal{I},\,
\max_{i\in\mathcal{I}}\widehat{\Delta}_i(x_{-\delta^{\circ}})&\leq &\hat\delta+\delta^{\circ};\\ 
\label{eq:controlconstraintsboundary}%\forall i\in\mathcal{I},\,
\min_{i\in\mathcal{I}}\inf_{x\in X^{*,\epsilon}_{\delta,{\rm act}(i)}\cap \widehat{X}^{*,\hat\epsilon}} \widehat{\Delta}_i(x)&> & \hat\delta-\delta;\\
\label{eq:controlobjective} \inf_{x\in X^{*,=\epsilon}_{\delta}\cap \widehat{X}^{*,\hat\epsilon}} \widehat{\Delta}_0(x;x_{-\delta^{\circ}}) &>& -(\epsilon - (f(x_{-\delta^{\circ}}) - f^*)-\epsilon_0),
\end{eqnarray}
then:   
\begin{enumerate}
\item \(\widehat{X}^{*,\epsilon_0}\subset X^{*,\epsilon}_\delta,\) or equivalently, any $x\in\widehat{X}$ with $\widehat{F}(x)\leq \widehat{F}^* + \epsilon_0$ satisfies $x\in Y$, $\max_{i\in\mathcal{I}}f_i(x)\leq \delta$ and $f(x)\leq f^*+\epsilon$;
\item The values of the SAA and the ideal problem satisfy \[|\widehat{F}^*- f^*|\leq |\widehat{\Delta}_0(x^*)| + \sup_{x\in X^{*,\epsilon}_{\delta}}|\widehat{\Delta}_0(x;x^*)| + \max\{\epsilon,\gap(\delta)\}.\]
\end{enumerate}\end{proposition} 

\begin{remark}
In Proposition 5 in \cite{2020oliveira:thompsonI}, we set $\hat\delta=0$. The change in the proof is minor.
\end{remark}

\subsection{Localized bounds for SAA under convexity and heavier tails}\label{ss:SAA:convex:loc}

We now recall a fundamental result from a companion paper \cite{2020oliveira:thompsonI}. Theorem \ref{thm:convexrandomset} below establishes bounds for the SAA problem when the set $Y$, the functions $F_i$ are convex and the feasible set satisfies a localized Slater-type condition (Assumption \ref{assump:LSCQ}). There are two fundamental features in Theorem \ref{thm:convexrandomset} to be noted. The first concerns ``localized'' bounds in terms of the geometry around approximate solutions. Unfortunately, the statement of such precise bounds are somewhat involved. Localized bounds in Statistical Learning are also somewhat technical \cite{bartlett2005}; in our case an additional difficulty lies in the fact that feasibility affects optimality. We recommend reading the preliminary discussion in Section 5.1 in \cite{2020oliveira:thompsonI}. Additionally, in Section 5.3 in \cite{2020oliveira:thompsonI}, we present two concrete applications of Theorem \ref{thm:convexrandomset} when the problem has a local regular solution set. They include locally strongly-convex and locally weak sharp problems. In Section \ref{sec:cvar},  we apply Theorem \ref{thm:convexrandomset} to the risk-averse optimization problem, clarifying how to use it in a concrete application. A second observation is that the bounds stated in Theorem \ref{thm:convexrandomset} hold for a large class of heavier tails. Precisely, it distinguishes two separate components: a subgaussian tail dictated by the geometry of near-optimal solutions and the heavy-tailed behavior of the objective function's H\"older's modulus.  

\begin{theorem}[Convex sets and functions; proof in companion paper \cite{2020oliveira:thompsonI}]\label{thm:convexrandomset} Make Assumption \ref{assump:LSCQ} with constants $\eta_*,\stheta_*$. Also assume $\parametersgoodnessplus$-goodness over the set $Z=X^{*,\stheta}_{\delta}$ for every choice of $(\stheta,\delta)\in [0,\stheta_*]\times [0,\eta_*]$ (cf. Assumption \ref{assump:goodFi}), where $\sigma^2(\stheta,\delta)$ depends continuously on $\stheta$ and $\delta$ (note that $\sigma^2(\stheta,\delta)$ depends on $(\stheta,\delta)$ but the other parameters in Assumption \ref{assump:goodFi} are fixed). 

Set $\hat\delta=0$. Fix parameter $t\geq 0$. For every $0<\epsilon\leq \stheta_*$ and $0<\delta<\eta_*$ satisfying $\epsilon+{\rm gap}(-\delta)\leq \stheta_*$, set:
\begin{eqnarray*}\widehat{w}_N(t;\delta;\epsilon)&:=&  \,\sigma(\epsilon+{\rm gap}(-\delta),\delta)\left\{4\sqrt{3}\frac{\gamma^{(\alpha)}_2(X^{*,\epsilon+{\rm gap}(-\delta)})}{\sqrt{N}}\right. \\ & & \left. + 6\sqrt{3}\frac{\diam^{\alpha}(X^{*,\epsilon+{\rm gap}(-\delta)})\sqrt{1 + \log(2|\mathcal{I}|+2)+t}}{\sqrt{N}}\right\},\end{eqnarray*} 
For $(\epsilon,\delta)$ as above, we define parameters $\check{\delta}(t;\epsilon)$ and $\check{w}(t;\epsilon)$ as follows. 
\begin{enumerate}\item If $\mathcal{I}=\emptyset$ (there are no constraints in expectation), then $\check{\delta}(t;\epsilon):=0$ and $\check{w}(t;\epsilon) = \widehat{w}_N(t;0;\epsilon)$.
\item Otherwise, assume that \[S_{N,\eta_*}(t;\epsilon):=\left\{\delta\in(0,\eta_*) \,:\, \begin{array}{l}\epsilon + {\rm gap}(-\delta) \leq \stheta_*,\\ \widehat{w}_N(t;\delta;\epsilon)+ \sigma_*\sqrt{\frac{6(1+\log(2|\mathcal{I}|+2)  + t)}{N}}< \delta\end{array}\right\}\]
is nonempty, and define \[\check{\delta}(t;\epsilon):=\inf S_{N,\eta_*}(t;\epsilon)\mbox{ and }\check{w}(t;\epsilon) := \widehat{w}_N(t;\check{\delta}(t;\epsilon);\epsilon).\]\end{enumerate}

Now, fix $\epsilon_0\in[0,\vartheta_*)$ and assume the set 
\[R_{N,\eta_*}(t;\epsilon_0):=\{\epsilon \in (\epsilon_0,\stheta_*]\,:\, \epsilon > \epsilon_0 + {\rm gap}(-\check{\delta}(t;\epsilon)) +  2\check{w}(t;\epsilon)\},\]
is nonempty so that 
$$
\check{r}(t;\epsilon_0):=\inf R_{N,\eta_*}(t;\epsilon_0)
$$ 
is well defined. Also set 
$$\check{\delta}(t):=\lim_{\epsilon\searrow \check{r}(t;\epsilon_0)}\check{\delta}(t;\epsilon).
$$

Now define ${\rm Good}_{\rm Thm. \ref{thm:convexrandomset}}(t,\epsilon_0)$ as the event where the following properties all hold.
\begin{enumerate}
\item[{\bf (a)}] \[\widehat{X}^{*,\epsilon_0}\subset X_{\check{\delta}(t)}^{*,\check{r}(t;\epsilon_0)};\]
that is, all $x\in \widehat{X}$ with $\widehat{F}(x)\leq \widehat{F}+\epsilon_0$ also satisfy $f(x)\leq f^*+\check{r}(t;\epsilon_0)$ and $\max_{i\in\mathcal{I}}f_i(x)\leq \check{\delta}(t)$;
\item[{\bf (b)}] the values of the SAA and the ideal problem satisfy:
\[|\widehat{F}^* - f^*|\leq \sigma_*\sqrt{\frac{6(1+\log(2|\mathcal{I}|+2)  + t)}{N}} + \frac{\check{r}(t;\epsilon_0)}{2}
+\max\{\check{r}(t;\epsilon_0),\gap(\check\delta(t))\}.
\]

\item[{\bf (c)}] for all $x\in \widehat{X}^{*,\epsilon_0}$, \[\dist(x,X)\leq \min\left\{\frac{\diam(X^{*,\stheta_*})\,\check{\delta}(t)}{\eta_*},2\diam(X^{*,\check{r}(t;\epsilon_0)})\right\}.\]
\end{enumerate}
Then \[\prob({\rm Good}_{\rm Thm. \ref{thm:convexrandomset}}(t,\epsilon_0))\geq 1-e^{-t}-2(|\mathcal{I}|+1)\rho.\] If we assume instead $\parametersgreatnessplus$-greatness of the functions $F_i$  (cf. Assumption \ref{assump:greatFi}) instead of $\parametersgoodnessplus$-goodness, then one may take $\rho = (\cbdg\kappa_p\sqrt{p/N})^p$ above.
\end{theorem}

\begin{remark}
For further comments, see Section 5.1 and remarks after Theorem 3 in our companion paper \cite{2020oliveira:thompsonI}.The non-emptyness of the sets $S_{N,\eta_*}(\cdot;\cdot)$ and $R_{N,\eta_*}(\cdot;\cdot)$, required in Theorem \ref{thm:convexrandomset}, are equivalent to a lower bound on the sample size $N$. As with the obtained rates, a localized lower bound on $N$ can be obtained by using the control on the quantities  $\gap(-\delta)$, $\sigma(\epsilon+\gap(-\delta),\delta)$, $\gamma_2^{(\alpha)}(X^{*,\epsilon+\gap(-\delta)})$ and $\diam(X^{*,\epsilon+\gap(-\delta)})$ and solving the inequalities defining $S_{N,\eta_*}(\cdot;\cdot)$ and $R_{N,\eta_*}(\cdot;\cdot)$.  This is more concretely exemplified in Propositions 3-4 in our companion paper \cite{2020oliveira:thompsonI} and also in Section \ref{sec:cvar} when applying Theorem \ref{thm:convexrandomset} for the risk-averse optimization problem. 
\end{remark}

\section{Risk-averse portfolio optimization}\label{sec:cvar}

In this section we illustrate our results to portfolio risk minimization with conditional value-at-risk constraints. One fundamental tool is the localized bound of Theorem \ref{thm:convexrandomset}. Let us recall that the conditional value at risk at probability level $p$ of an integrable random variable $Z$ is the quantity:
\[\cvar_p(Z) := \inf_{t\in \re}\,\left\{t + \frac{\esp{(Z-t)_+}}{p}\right\}.\]
Equivalently, $\cvar_p(Z)$ is the expectation of $Z$ when it is conditioned to lie above its $(1-p)$-th quantile. The conditional value at risk is one of the so-called coherent risk measures, which are applied in Finance due to their desirable properties \cite{rockafellar:urysaev2000}. 

Consider a random vector $\xi\in \re^{d_0}$ whose law $\probn$ has finite first moment. The coordinates 
of $\xi$ describe the losses of $d_0$ distinct assets $\xi[j]$ ($1\leq i\leq d_0$) over a period of time; negative losses correspond to gains. One must decide what fraction $x[i]$ of resources to allocate to each asset so as to maximize the expected return. The vector $x=(x[1],\dots,x[d_0])^T$ lives in the standard simplex in $\re^{d_0}$:
\[Y_0:= \left\{x\in \re^{d_0}_+\,:\, \sum_{j=1}^{d_0}x[j]=1\right\},\] 
and our goal will be to minimize the expected loss $\probn\langle x,\cdot\rangle$ over $x\in Y_0$ subject to a constraint over the conditional value at risk:
\begin{eqnarray}
\min_{x\in Y_0}&\quad & \probn\left[\langle x,\cdot\rangle\right]\nonumber\\
\mbox{s.t.}&\quad &\cvar_p(\langle x,\xi\rangle) \leq \beta.\label{problem:cvar:portfolio1}\end{eqnarray}

To make this fit the framework of \S \ref{sub:basic}, we set $d:=d_0+1$ and write elements of $\re^d$ as $(x,t)$ with $x\in\re^{d_0}$ and $t\in\re$. We set $Y:=Y_0\times \re$, $\mathcal{I}=\{1\}$ and define functions:
\[F(x,t,\xi):= \langle x,\xi\rangle\mbox{ and }F_1(x,t,\xi):=  t + \frac{{(\langle x,\xi\rangle-t)_+}}{p} - \beta.\]
Letting $f(x,t):= \probn\,F(x,t,\cdot)$ and $f_1(x,t):=\probn\,F_1(x,t,\cdot)$, we see that (\ref{problem:ideal}) is an equivalent formulation of the above problem. Here, 
\begin{align*}
X:=\left\{(x,t)\in Y:t+\frac{\esp[(\langle x,\xi\rangle-t)_+]}{p}-\beta\le 0\right\}. 
\end{align*}

We will assume that we have i.i.d. samples from $\xi\sim \probn$. This is probably not a realistic assumption in Finance, but is a good way to illustrate our general theory. In principle, the most general portfolio optimization problem would associate ``high risk'' to every asset with high returns. Still, it makes sense to envision a case in which, among these, there is a portion of them with significant more risk than the other remaining portion. We formalize this in the next assumption.

\begin{assumption}\label{assump:saacvar}The coordinates of random vector $\xi\sim \probn$ take the form:
\[\xi[j] := \left\{\begin{array}{lcl}-\mu+v[j] & , & 1\leq j\leq g;\\ 
v[j] & , & g+1\leq j\leq g+m;\\ -\mu + \phi W + v[j] & , & g+m+1\leq j\leq d_0.\end{array}\right.\]
Here:
\begin{enumerate}
\item $g,m\geq 1$ are positive integers with $g+m<d_0$;
\item $\mu>0$ is a positive constant; 
\item $v = (v[1],\dots,v[d_0])^T\in \re^{d_0}$ is a vector of independent and identically distributed random variables with mean $0$, variance $1$. We also assume that $\esp[\|v\|^2_\infty]\leq a$ and $\esp[\|v\|^4_\infty]\leq \kappa^2\,a^2$ for some $a,\kappa>0$. 
\item finally, $W\in \re$ is a random variable independent of vector $v$ with mean $0$, $\phi\ge1$,
$\esp[W^{2}]\leq w<+\infty$, $\esp[W^4]\leq \kappa^2 w^2$, and ${\cvar}_p[W]\geq \beta_*$, where 
$\kappa>0$ is defined in item 3.
\end{enumerate}
\end{assumption}
The random vector $\xi$ represents a caricature of a scenario where certain assets have highly correlated risks. The first $g$ assets in $\xi$ are high-return, low-risk assets. Their average return is $-\mu$ and the losses/gains (given by the $v[j]$) have relatively light tails. The next $m$ assets are also low-risk, but have significantly smaller expected gains. The last $d_0-m-g$ assets yield as large expected returns as the first $g$ ones, but they also carry a correlated ``risk factor" $\phi W$ with large conditional value-at-risk $\phi\beta_*$. Here, $\beta_*$ is the ``normalized risk'' and $\phi$ is the ``risk inflation'' factor when $W$ is multiplied by $\phi$. We highlight that, in Assumption \ref{assump:saacvar}, the set of ``good'' $g$ assets are associated to the first $g$ coordinates of $\xi$ for simplicity. Of course, the user does not know which these coordinates are.  

Even when Assumption \ref{assump:saacvar} holds, it is not obvious if the SAA of \eqref{problem:cvar:portfolio1} will behave well statistically (when $\phi$ or $\beta_*$ are large compared to other parameters). Precisely, we could in principle expect that the large parameters $(\phi,\beta_*)$ could affect the rate proportionally to the ``extrinsic dimension'' $d=d_0+1$. The content of the next Theorem \ref{thm:cvar} is to show that this is not the case: $(\phi,\beta_*)$ only affects the statistical rate via a term that is proportional to $g$, the number of ``good'' assets. We next explain what we mean by this exactly before the precise rates of Theorem \ref{thm:cvar} are presented.

Let $x^*$ be the solution of \eqref{problem:cvar:portfolio1} and $\widehat{x}^*$ be the solution of its SAA. To understand this theorem, note first that for a large class of distributions for $v$ and $W$, we may take $\kappa$ relatively small, say
$\kappa=10$. If other problem parameters are kept fixed, Theorem \ref{thm:cvar} implies that there exist constants $c,C>0$ depending on $p,\mu,\beta,\beta_*$ and $w$ such that, if  $N\geq d\,(\phi^2+a)$ then
\begin{eqnarray*}\probn\langle \widehat{x}^*,\cdot\rangle &\leq & \probn\langle x^*,\cdot\rangle + c\,\frac{(\phi^2 + a)^{1/2} \sqrt{1+t+g} +(\phi^2+a)^{1/2}\frac{\beta+\mu}{\phi\beta_*}\sqrt{d-g}}{\sqrt{N}},\\
\cvar_p(\langle \widehat{x}^*,\xi\rangle) &\leq & \beta +  c\,\frac{(\phi^2 + a)^{1/2} \sqrt{1+t+g} +(\phi^2+a)^{1/2}\frac{\beta+\mu}{\phi\beta_*}\sqrt{d-g}}{\sqrt{N}},
\end{eqnarray*}
with probability  $1-e^{-t}-C/N$. The above inequalities establish the rate of convergence of the optimal value and feasibility constraint of the SAA solution $\widehat{x}^*$ (compared to the true solution $x^*$). We have modeled a correlated ``risk factor'' $\phi W$ with a variance proportional to $\phi^2$ and conditional value-at-risk of at least $\phi\beta_*$. Examining the above rates, we see they can be split into the components of order
$$
\sqrt{\frac{(\phi^2+a)g}{N}}
\quad
\mbox{and}
\quad
\frac{\beta+\mu}{\phi\beta_*}\sqrt{\frac{(\phi^2+a)(d-g)}{N}}.
$$

We illustrate the above rates in two ``high risk'' regimes. First, suppose the risk inflation factor is reasonable, say, $\phi=1$. In this case, the dependence of the rate with $d-g$ decreases proportionally to $\beta_*^2$. In particular, if the normalized risk $\beta_*$ is very large, say, $\beta_*\ge(\beta+\mu)\sqrt{\frac{d-g}{g}}$, then the optimality and feasibility rates of the SAA are of the order  
$
\sqrt{\frac{(\phi^2+a)g}{N}}.
$
This rate depends only on $g$. Secondly, suppose now that the normalized risk $\beta_*$ is reasonable, say $\beta_*=1$, but the risk inflation factor $\phi$ is very large (compared to the other parameters). Then the optimality and feasibility rates of the SAA are of the order  
$
\sqrt{\frac{(\phi^2+a)g}{N}}
+(\beta+\mu)\sqrt{\frac{d-g}{N}}.
$
While this rate still depends on the extrinsic dimension $d$ it satisfies a ``robust'' property with respect to risk inflation: $\phi^2$ affects the rate via a term proportional to $g$ but \emph{not} to $d-g$. In establishing such precise rates, we crucially use the ``localization'' feature of Theorem \ref{thm:convexrandomset}.

We now state the theorem. 
\begin{theorem}\label{thm:cvar} Let $x^*$ denote the solution \eqref{problem:cvar:portfolio1} and $\widehat{x}^*$ denote the solution of its SAA problem. Let $A:=a+\phi^2 w+1$. Then there exist absolute constants $C,C_0>0$ such that, for all $t\geq 0$, if $-\mu + (1/p\sqrt{g})\leq \beta/2$ and
\begin{align*}
C_0\sqrt{A}\frac{\sqrt{g} + \left(\frac{1}{\mu} +  \frac{\beta+\mu}{\phi\beta_*}\right)\sqrt{d-g}+(\Delta+2)\sqrt{1+t}}{p\sqrt{N}} <\beta/2,\\
C_0\frac{\sqrt{A}\sqrt{d-g}}{p\sqrt{N}\mu}\le1/2,
\end{align*}
where $\Delta:=\frac{\beta+\mu}{1-p}$, then the following holds with probability $\geq 1-e^{-t}-C\kappa^2/N$: 
\begin{eqnarray*}\probn\langle \widehat{x}^*,\cdot\rangle &\leq & \probn\langle x^*,\cdot\rangle + C\frac{\sqrt{A}}{p\sqrt{N}}\left\{\sqrt{g} + \frac{\beta+\mu}{\phi\beta_*}\sqrt{d-g} + (\Delta+2) \sqrt{1+t}\right\},\\
\cvar_p(\langle  \widehat{x}^*,\xi\rangle) &\leq & \beta +C\frac{\sqrt{A}}{p\sqrt{N}}\left\{\sqrt{g} + \frac{\beta+\mu}{\phi\beta_*}\sqrt{d-g} + (\Delta+2) \sqrt{1+t}\right\}\\
& & + C\frac{\sqrt{A}\sqrt{d-g}}{p\sqrt{N}\mu}\cdot\frac{\sqrt{A}}{p\sqrt{N}}\left\{\sqrt{g} + \frac{\beta+\mu}{\phi\beta_*}\sqrt{d-g} + (\Delta+2) \sqrt{1+t}\right\}.
\end{eqnarray*}
\end{theorem}
\begin{remark}
We note that, from Proposition \ref{prop:solution}, $\probn\langle x^*,\cdot\rangle=-\mu$.
\end{remark}
 
\subsection{Preliminaries}

We will prove Theorem \ref{thm:cvar} via Theorem \ref{thm:convexrandomset}. To do this, we first prove some properties about the corresponding ideal problem. One fact that we will often use implicitly is that the conditional value-at-risk of a random variable $Z$ satisfies:
\[\forall a\in \re_+,\, \forall b\in\re \,:\, \cvar_p(aZ+b) = a \cvar_p(Z) + b.\]

In the rest of this section, $\|\cdot\|$ denotes the $\ell_1$ norm over $\re^d$. 
\begin{proposition}\label{prop:valuemax}For any $x\in Y_0$,
\[\probn[\langle x,\cdot\rangle] = -\mu\,\left(1-\sum_{j=g+1}^{g+m}x[j]\right).\]
In particular, the value of the problem (\ref{problem:cvar:portfolio1}) is at least $-\mu$ and any solution $x$ with 
$\esp[\langle \xi,x\rangle]\leq \epsilon-\mu$ must satisfy
\[\sum_{j=g+1}^{g+m}x[j]\leq \frac{\epsilon}{\mu}.\]\end{proposition}
\begin{proof}Note that
\begin{equation}\label{eq:decompinner}\langle x,\xi\rangle  = \langle x,v\rangle - \mu\,\left(1 - \sum_{i=g+1}^{g+m}x[i]\right) + W \left(\sum_{i=g+m+1}^{d_0}x[i]\right).\end{equation}
Since $\esp[ v]=0$ and $\esp[W]=0$, we obtain the expression for $\esp[\langle \xi,x\rangle]$. The other statements follow trivially.
\qed
\end{proof}

\begin{proposition}\label{prop:solution} Assume  $-\mu + 1/(p\sqrt{g})\leq \beta/2$. Choose \[(x^*,t^*)\in Y\times \re\] with $x^*[j]=1/g$ for $1\leq j\leq g$, $x^*[j]=0$ for $j>g$, and $t^*$ a $(1-p)$-quantile of $-\langle x^*,\xi\rangle$. Then $(x^*,t^*)$ is an optimal solution to (\ref{problem:cvar:portfolio1}), and 
$\cvar_p[\langle \xi,x^*\rangle]\leq \beta /2$.\end{proposition} 
\begin{proof}Clearly, $\probn \langle x^*,\cdot\rangle=-\mu$. Moreover, 
\[\cvar_p(\langle x^*,\xi\rangle) = \cvar_p\left(-\mu + \frac{1}{g}\sum_{j=1}^{g}v[j]\right)\leq -\mu +\frac{\esp\left|\frac{1}{g}\sum_{j=1}^gv[j]\right|}{p}.\]
Since 
\[\esp\left|\frac{1}{g}\sum_{j=1}^gv[j]\right|\leq \sqrt{\esp\left(\frac{1}{g}\sum_{j=1}^gv[j]\right)^2} = \frac{1}{\sqrt{g}},\]
we deduce (using our assumptions) that 
$\cvar_p(\langle x^*,\xi\rangle) \leq -\mu + 1/(p\sqrt{g})\leq \beta/2$ and $(x^*,t^*)$ is feasible.
\qed
\end{proof}

\begin{proposition}\label{prop:CVaRnotbig} For any pair $(x,t)\in Y$,
\[\sum_{j=g+m+1}^{d_0}x[j]\leq \frac{\cvar_p(\langle x,\xi\rangle)+ \mu}{\phi\beta_*}.\]\end{proposition}
\begin{proof}Looking at (\ref{eq:decompinner}) and the fact that $v$ and $W$ are independent and centered, we see that:
\[\esp[\langle x,\xi\rangle\mid W] = -\mu\,\left(1 - \sum_{j=g+1}^{g+m}x[j]\right) + \phi W \left(\sum_{j=g+m+1}^{d_0}x[j]\right).\]
In particular, for any $t\in \re$, the conditional Jensen's inequality implies:
\begin{equation*}\esp(\langle x,\xi\rangle - t)_+ \geq \esp\left(-\mu\,\left(1 - \sum_{j=g+1}^{g+m}x[j]\right) + \phi W\left(\sum_{j=g+m+1}^{d_0}x[j]\right)- t\right)_+,\end{equation*}
and we obtain:
\begin{eqnarray*}\cvar_p(\langle x,\xi\rangle)&\geq & -\left(1 - \sum_{j=g+1}^{g+m}x[j]\right)\,\mu + \left(\sum_{j=g+m+1}^{d_0}x[j]\right)\,\cvar_p(\phi W)\\ &\geq & -\mu + \phi\beta_*\,\left(\sum_{j=g+m+1}^{d_0}x[j]\right).
\end{eqnarray*}
\qed
\end{proof}

\begin{proposition}\label{prop:t:feasible}
Given a feasible pair $(x,t)\in X$, we have
\[\beta - \Delta \leq t\leq \beta,\mbox{ with }\Delta:=\frac{\beta+\mu}{1-p}.\]
 \end{proposition}
 \begin{proof} Write 
\begin{equation}
\tilde F_1(x,t,\xi):= t + \frac{(\langle x,\xi\rangle - t)_+}{p}.
\end{equation} 
 If $(x,t)$ is feasible, $\probn[\tilde F_1(x,t,\cdot)]\leq \beta$, so $t\leq \beta$. Moreover, 
\begin{eqnarray*}\probn[\tilde F_1(x,t,\cdot)]&\geq & t + \probn\left[\frac{(\langle x,\cdot\rangle - t)}{p}\right] \\ &=& \probn\left[\frac{\langle x,\cdot\rangle }{p}\right] - \frac{(1-p)\,t}{p}\\ &\geq & -\frac{\mu}{p} - \frac{(1-p)\,t}{p},\end{eqnarray*}
so
\[-\frac{\mu}{p} - \frac{(1-p)\,t}{p}\leq \beta\Rightarrow t\geq \beta -\Delta.\]\qed
\end{proof}

\begin{lemma}\label{lem:covernumber} Assume $\|\cdot\|$ is the $\ell^1$ norm over $\re^d$. Then:
\begin{itemize}
\item[i)] The 
$\parametersgreatnessq$-greatness assumption is satisfied over $Y$ (cf. Assumption \ref{assump:greatFi}) with 
\[\sigma_*^2 = \sigma^2 = \frac{2A}{p^2} = \frac{a + \phi^2 w + 1}{p^2},\,\alpha=1,\, q=2\mbox{ and }\kappa_2=\kappa.\]
As a consequence, the $(\sigma_* ^2,\sigma^2,1,\rho)$-goodness assumption over $Y$ is also satisfied with
\[\rho:= \frac{2}{N}\left(\cbdg\kappa\right)^2.\]
\item[ii)] Given $\epsilon>0$, 
$\diam(X^{*,\epsilon})\leq \Delta+2$ and
\[\gamma_2^{(1)}(X^{*,\epsilon})\lesssim \sqrt{g} +\min\left\{1,\left(\frac{\epsilon}{\mu} + \frac{\beta+\mu}{\phi\beta_*}\right)\right\}\,\sqrt{d-g}  + \Delta.\]\end{itemize}
\end{lemma}
\begin{proof}We fix the $\ell_1$ norm over $\re^d=\re^{d_0+1}$. With this notation, it is clear that $x\mapsto F_1(x,t,\xi)$ is a 
$\mathsf{L}_1(\xi)$-Lipschitz function of $x$, with
\[\mathsf{L}_1(\xi):=\frac{\phi|W| + \|v\|_{\infty} + 1}{p}.\]

Under our assumptions,
\[\probn\mathsf{L}^2_1(\cdot) \leq \frac{2(\phi^2w + a + 1)}{p^2} = \frac{2A}{p^2}\mbox{ and }  \probn\mathsf{L}^4_1(\cdot)\leq 4\kappa^2\,\frac{\phi^4w^2 + a^2 + 1}{p^4} \le \kappa^2\,\left(\frac{2A}{p^2}\right)^2,\]
so the $\parametersgreatnessq$-greatness assumption is indeed satisfied as claimed in item i).

Let us now prove item ii). Given $m\in\mathbb{N}$, let $\mathbb{B}^m$ be the $\ell_1$ unit ball in $\re^m$. Propositions  \ref{prop:valuemax}, \ref{prop:CVaRnotbig} and \ref{prop:t:feasible} give:
\[X^{*,\epsilon}\subset \mathbb{B}^g\times \left\{\min\left[1,\left(\frac{\epsilon}{\mu} + \frac{\beta+\mu}{\phi\beta_*}\right)\right]\,\mathbb{B}^{d-g}\right\}\times [\beta-\Delta,\beta].\]
This fact, Proposition \ref{prop:sizeproduct} and (\ref{eq:simpleboundgamma2}) give the claimed bound on $\gamma_2^{(1)}(X^{*,\epsilon})$ (with room to spare in the constant factor).
Finally, for any $(x,t),(x',t')\in X^{*,\epsilon}$, $
\Vert x-x'\Vert=\sum_{j=1}^{d_0}|x[j]-x'[j]|+|t-t'|
\le2+\Delta,
$
since $x$ and $x'$ lie in the unit simplex and $|t-t'|\le\Delta$.
We thus conclude that $\diam(X^{*,\epsilon})\le2+\Delta$.
\qed\end{proof}

\subsection{Proof of SAA approximation properties} 

We are now ready to prove our main result on the $\cvar$-constrained portfolio optimization problem.  

\begin{proof}[of Theorem \ref{thm:cvar}] In this proof, $C,C_0>0$ denote absolute constants whose value may change from line to line. We will apply Theorem \ref{thm:convexrandomset} in what follows, and we take the notation in that theorem for granted. Let us fix $t\ge0$. In our setting 
$\epsilon_0:=0$. The assumptions of Theorem \ref{thm:convexrandomset} are satisfied with $|\mathcal{I}|=1$, 
$\stheta_*:=1$ and 
$\eta_*=\beta/2$ (say). Indeed, the optimal solution $(x^*,t^*)$ from Proposition \ref{prop:solution} belongs to $X_{-\eta_*}^{\stheta_*}$ under the assumptions of the present theorem and the feasible set is compact. Therefore,  ${\rm gap}(-\delta)=0$ for all $0\leq \delta\leq \eta_*$. This ensures Assumption \ref{assump:LSCQ} is satisfied. As for Assumption \ref{assump:greatFi}, by Lemma \ref{lem:covernumber} we may take
$\sigma_*^2=\sigma^2(\stheta,\delta)=2A/p^2$ and 
$\rho$ as in item i) of Lemma \ref{lem:covernumber}. It follows that the $(\sigma_* ^2,\sigma^2(\stheta,\delta),1,\rho)$-goodness assumption is satisfied over $Y$ and so over $X_{\delta}^{\stheta}$ for all $\stheta\in[0,\stheta^*]$ and $\delta\in[0,\eta_*]$.

We assume $N$ satisfies
\begin{align}
C_0\sqrt{A}\frac{\sqrt{g} + \left(\frac{1}{\mu} +  \frac{\beta+\mu}{\phi\beta_*}\right)\sqrt{d-g}+(\Delta+2)\sqrt{1+t}}{p\sqrt{N}} <\beta/2,\label{eq:assumeN:eq1}\\
C_0\frac{\sqrt{A}\sqrt{d-g}}{p\sqrt{N}\mu}\le1/2,\label{eq:assumeN:eq2}.
\end{align}
By item ii) in Lemma \ref{lem:covernumber}, the quantity 
$\widehat{w}_N(t;\delta;\epsilon)$ in Theorem \ref{thm:convexrandomset} satisfies:
\begin{align}
\widehat{w}_N(t;\delta;\epsilon)\leq C\sqrt{A}\,\frac{\sqrt{g} + \left(\frac{\epsilon}{\mu} + \frac{\beta+\mu}{\phi\beta_*}\right)\,\sqrt{d-g} + (\Delta+2) \sqrt{1+t}}{p\sqrt{N}},\label{eq:cvar:wN:upper:bound}
\end{align}
for all $0\leq \delta\leq \eta_*$ and $0\leq \epsilon\leq \stheta^*$. Recalling that $\sigma_*^2=2A/p^2$ and by adjusting the constant $C_0$ in (\ref{eq:assumeN:eq1}) if necessary, we may ensure that, for all $0\leq \epsilon\leq \stheta^*$,  $S_{N,\eta_*}(t;\epsilon)\neq \emptyset$ and 
\begin{align}
\check{\delta}(t;\epsilon)\leq C\sqrt{A}\,\frac{\sqrt{g} + \left(\frac{\epsilon}{\mu} + \frac{\beta+\mu}{\phi\beta_*}\right)\,\sqrt{d-g} + (\Delta+2) \sqrt{1+t}}{p\sqrt{N}}.\label{eq:cvar:check:delta:upper:bound}
\end{align}

Consider now the definition of $R_{N,\eta_*}(t;0)$ and recall that $\gap(-\check\delta(t,\epsilon))=0$ for all $0\le\epsilon\le\stheta_*=1$. As, of course, $\check w_N(t;\epsilon):=\widehat{w}_N(t;\check\delta(t;\epsilon);\epsilon)$ satisfies the same upper  bound \eqref{eq:cvar:wN:upper:bound}, we may adjust $C_0$ in (\ref{eq:assumeN:eq1}) if necessary and obtain that $R_{N,\eta_*}(t;0)\neq \emptyset$. By definition, in establishing an upper bound on 
$\check r(t;0)$, it is sufficient to find the least $\epsilon\in(0,\stheta_*]$ satisfying
\begin{align*}
2C\frac{\sqrt{A}}{p\sqrt{N}}\left\{\sqrt{g} + \frac{\beta+\mu}{\phi\beta_*}\sqrt{d-g} + (\Delta+2) \sqrt{1+t}\right\}<\left(1-2C\frac{\sqrt{A}}{p\sqrt{N}}\cdot\frac{\sqrt{d-g}}{\mu}\right)\epsilon.
\end{align*}

Therefore, by adjusting the constant in \eqref{eq:assumeN:eq2} if necessary,
\[\check{r}(t;0)\leq  
C\frac{\sqrt{A}}{p\sqrt{N}}\left\{\sqrt{g} + \frac{\beta+\mu}{\phi\beta_*}\sqrt{d-g} + (\Delta+2) \sqrt{1+t}\right\}.\]

Finally, by setting 
$\epsilon\searrow\check{r}(t;0)$ in \eqref{eq:cvar:check:delta:upper:bound},  we get
\begin{eqnarray*}\check{\delta}(t)&\leq &  C\frac{\sqrt{A}}{p\sqrt{N}}\left\{\sqrt{g} + \frac{\beta+\mu}{\phi\beta_*}\sqrt{d-g} + (\Delta+2) \sqrt{1+t}\right\}\\
& & + C\frac{\sqrt{A}\sqrt{d-g}}{p\sqrt{N}\mu}\cdot\frac{\sqrt{A}}{p\sqrt{N}}\left\{\sqrt{g} + \frac{\beta+\mu}{\phi\beta_*}\sqrt{d-g} + (\Delta+2) \sqrt{1+t}\right\}.\qed
\end{eqnarray*} 

\end{proof}

\section{Persistency results for least squares with LASSO-type constraints}\label{subsection:lasso}
In this section we obtain improved persistence bounds for a variant of LASSO method for least squares in very high dimensions.  One main building block is Proposition \ref{lem:convexlocalized}. The sample space considered is $\Xi:=\re^d\times\re$ and a point $\xi\in\Xi$ will be decomposed as $\xi=(\mathbf{x}(\xi),y(\xi))$ where $\mathbf{x}(\xi)\in\re^d$ and $y(\xi)\in\re$. Within the setup of Section \ref{sec:basic}, we define the loss function
$$
F(x,\xi):=\left[y(\xi)-\langle\mathbf{x}(\xi),x\rangle\right]^2\quad\quad(x\in\re^d),
$$
where $\langle\cdot,\cdot\rangle$ denotes the standard inner product. In our notation, we define the \emph{risk} $f(x):=\probn F(x,\cdot)$ and  \emph{empirical risk}
$\widehat F(x):=\widehat\probn F(x,\cdot)$, where $\widehat\probn$ is the empirical distribution with respect to a size-$N$ i.i.d. sample $\xi^N$ of $\probn$. Finally, we let $Y\subset\re^d$ denote a closed convex set and consider minimizing $f$ over (a subset of) $Y$ via estimators $\widehat x:=\widehat x(\xi^N)$. We define the \emph{population} and \emph{empirical design matrices}, $\mathbf{\Sigma}\in\re^{d\times d}$ and $\mathbf{\widehat\Sigma}\in\re^{d\times d}$, respectively, by
\begin{eqnarray}
\forall v\in\re^d,\quad\mathbf{\Sigma} v&:=&\probn\langle v,\mathbf{x}(\cdot)\rangle\mathbf{x}(\cdot),\label{def:population:matrix}\\
\mathbf{\widehat\Sigma} v&:=&\widehat\probn\langle v,\mathbf{x}(\cdot)\rangle\mathbf{x}(\cdot).\label{def:design:matrix}
\end{eqnarray}

Recall that the usual ordinary least squares method minimizes $\widehat F$. When $N\gg d$,
this method typically produces a good approximation of the minimizer of $f$. This is
not true in the $N\ll d$ setting, where the least squares estimator is not consistent.
For this setting, Tibshirani \cite{tibshirani1996} proposed minimizing $\widehat F$ subject to a constraint on the $\ell_1$ norm $\Vert x\Vert_1$ of $x$: for some $R>0$,
$$
\widehat x^R_{\mbox{\tiny{lasso}},0}:=\argmin\left\{\widehat F(x):x\in\re^d,\Vert x\Vert_1\le R\right\}.
$$

Since then there has been an explosion of theoretical and practical work on the
LASSO. Most of the current literature considers a penalized variant of this estimator. Let's denote the $\ell$-th coordinate of $\mathbf{x}(\xi)$ as $\mathbf{x}(\xi)[\ell]$ and the diagonal matrix in $\re^{d\times d}$ with entries $a_1,\ldots,a_p$ as $\diag(a_\ell)_{\ell=1}^d$. Given $q\in[1,\infty)$, we define the following diagonal matrices in $\re^{d\times d}$:
\begin{eqnarray}
\mathbf{\widehat D}_{q}:=\diag\left(\sqrt[q]{\widehat\probn\left|\mathbf{x}(\cdot)[\ell]\right|^q}\right)_{\ell=1}^d,\quad\quad
\mathbf{D}_{q}:=\diag\left(\sqrt[q]{\probn\left|\mathbf{x}(\cdot)[\ell]\right|^q}\right)_{\ell=1}^d.
\label{def:pen:diag:matrix:empirical}
\end{eqnarray}
It is instructive to remark that the diagonal elements of the matrices in \eqref{def:population:matrix}-\eqref{def:pen:diag:matrix:empirical} are related by $\mathbf{\Sigma}[\ell,\ell]=\mathbf{D}_2[\ell,\ell]^2$ and $\mathbf{\widehat \Sigma}[\ell,\ell]=\mathbf{\widehat D}_2[\ell,\ell]^2$. 
Bickel, Ritov and Tsybakov \cite{2009bickel:ritov:tsybakov} considers the penalized least squares problem under fixed design:
$$
\widehat x^R_{\mbox{\tiny{lasso}},1}:=\argmin\left\{\widehat F(x)+\lambda\Vert\mathbf{\widehat D}_2 x\Vert_1:x\in\re^d\right\}.
$$

\begin{theorem}[A persistent result for LASSO-type constraints with heavier tails]\label{thm:lasso}
Assume $(\mathbf{x}(\xi),y(\xi))\in\re^d\times\re$ is a random vector with finite $q$th moments, $q\ge9$. Considering definition \eqref{def:population:matrix}, we assume that there exist numbers $C,u>0$ and $p\in(0,1]$ such that
\begin{eqnarray}
\forall v\in\re^d,&&\quad\probn\left\{\xi\in\Xi:|\langle v,\mathbf{x}(\xi)\rangle|>u\sqrt{\langle v,\mathbf{\Sigma} v\rangle}\right\}\ge p,\label{equation:thm:lasso:small:ball:prob}\\
\forall 1\le\ell\le d,&&\quad\sqrt[q]{\probn|\mathbf{x}(\cdot)[\ell]|^q}\le C\sqrt[3]{\probn|\mathbf{x}(\cdot)[\ell]|^3}.
\label{equation:thm:lasso:moment:condition}
\end{eqnarray}

Let $\widehat\probn$ be the empirical distribution corresponding to a size-$N$ i.i.d. sample $\{\xi_j\}_{j\in[N]}$ of $\probn$. Choose $R>0$ and define: 
$$
\widehat x_{\mbox{\tiny{\emph{lasso}}}}:=\argmin_{x\in\re^d}\left\{\widehat F(x):\Vert\mathbf{\widehat D}_3 x\Vert_1\le R\right\}.
$$
Choose also the confidence level $\delta\in(0,1)$. 

Then there exists $C_0>0$ depending only on $C$, $q$, $u$ and $p$ such that the following holds. Suppose that the sample size is large enough so that $N\ge(1/\delta)^{\frac{1}{(q/6)-1}}$ and
$$
\alpha:=C_0\sqrt{\frac{\ln(d/\delta)}{N}}\le\frac{1}{2}.
$$
We define the ``true solution''
$$
x_*:=\argmin_{x\in\re^d}\left\{f(x):\Vert\mathbf{D}_3 x\Vert_1\le(1+\alpha)R\right\},
$$
and the ``noise'' $\epsilon(\xi):=y(\xi)-\langle x_*,\mathbf{x}(\xi)\rangle$. Then with probability $\ge1-\delta$, the following exact feasibility and near-optimality hold for the \emph{SAA} solution $\widehat x_{\emph{\tiny{lasso}}}$:
\begin{eqnarray}
\Vert\mathbf{D}_3\widehat x_{\mbox{\tiny{\emph{lasso}}}} \Vert_1&\le &(1+\alpha)R,\label{thm:lasso:feasibility}\\
f(\widehat x_{\mbox{\tiny{\emph{lasso}}}})-f(x_*)&\le & C_1\left\{\left[(\probn+\widehat\probn)\epsilon(\cdot)^6\right]^{\frac{1}{6}}R\sqrt{\frac{\ln(d/\delta)}{N}}+R^2\frac{\ln(d/\delta)}{N}\right\},\label{thm:lasso:optimality}
\end{eqnarray}
where $C_1>0$ is a constant that only depends on $C$ and $q$.
\end{theorem}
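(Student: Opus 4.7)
My plan is to apply Theorem~\ref{thm:convex:solution:approximation} to the LASSO problem viewed as a convex program with a single ``stochastic'' constraint coming from the weighted $\ell_1$-ball. I take $\mathcal{I}=\{1\}$, true constraint $f_1(x)=\Vert\mathbf{D}_3 x\Vert_1-R$ (so that $f_1(x)\le\alpha R$ corresponds to $\Vert\mathbf{D}_3 x\Vert_1\le(1+\alpha)R$), and empirical constraint $\widehat F_1(x)=\Vert\mathbf{\widehat D}_3 x\Vert_1-R$ with $\hat\epsilon=0$. For the interior near-solution I would use $y_*=0$, which is trivially strictly feasible for both $\mathbf{D}_3$- and $\mathbf{\widehat D}_3$-balls since $R>0$. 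Set $\gamma=\alpha R$ in Theorem~\ref{thm:convex:solution:approximation}, so that the desired feasibility conclusion matches $\widehat X^*_{t_1}\subset X_{\gamma}$, and pick $t_1,t_2,t$ so that the optimality statement $f(\widehat x)\le\min_{X_\gamma}f+t+t_2=f(x_*)+t+t_2$ reproduces \eqref{thm:lasso:optimality}.

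\textbf{Feasibility (condition C2).} The content of this step is uniform concentration of $\mathbf{\widehat D}_3[\ell,\ell]$ around $\mathbf{D}_3[\ell,\ell]$. Since $\mathbf{\widehat D}_3[\ell,\ell]^3=\widehat\probn|\mathbf{x}(\cdot)[\ell]|^3$ is an empirical average of random variables with $q/3\ge 3$ finite moments (by \eqref{equation:thm:lasso:moment:condition} and $q\ge 9$), I would apply the Burkholder--Davis--Gundy + Markov recipe from Remark~\ref{remark:thm:SAA:exp:fixed:prob} coordinate-wise, then union-bound over $\ell\in[d]$. This yields, with probability $\ge 1-\delta/2$ and provided $N\ge(1/\delta)^{1/((q/6)-1)}$, the multiplicative estimate $|\mathbf{\widehat D}_3[\ell,\ell]-\mathbf{D}_3[\ell,\ell]|\lesssim\alpha\,\mathbf{D}_3[\ell,\ell]$ for all $\ell$ with $\alpha=C_0\sqrt{\ln(d/\delta)/N}$. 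Combined with $\Vert\mathbf{\widehat D}_3\widehat x_{\mbox{\tiny{lasso}}}\Vert_1\le R$ this gives $\Vert\mathbf{D}_3\widehat x_{\mbox{\tiny{lasso}}}\Vert_1\le(1+\alpha)R$, which is \eqref{thm:lasso:feasibility}, and also verifies (C2) with $\gamma-\hat\epsilon=\alpha R$ on the localized level set of the constraint.

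\textbf{Optimality (condition C1).} Using the quadratic structure, for $z\in\re^d$ one has
\begin{equation*}
F(z,\xi)-F(x_*,\xi)=-2\epsilon(\xi)\langle z-x_*,\mathbf{x}(\xi)\rangle+\langle z-x_*,\mathbf{x}(\xi)\rangle^2,
\end{equation*}
so the localized deviation $\hat\Delta_{0,\gamma}(x_*|t+t_2,y_*,t_1)$ splits into a cross term and a quadratic term. For any $z$ in the localized set, both $z$ and $x_*$ satisfy $\Vert\mathbf{D}_3\cdot\Vert_1\lesssim R$, hence $\Vert\mathbf{D}_3(z-x_*)\Vert_1\lesssim R$. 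By duality,
\begin{equation*}
|(\widehat\probn-\probn)[\epsilon(\cdot)\langle z-x_*,\mathbf{x}(\cdot)\rangle]|\le\Vert\mathbf{D}_3(z-x_*)\Vert_1\cdot\max_{\ell\in[d]}\left|(\widehat\probn-\probn)[\epsilon(\cdot)\mathbf{x}(\cdot)[\ell]/\mathbf{D}_3[\ell,\ell]]\right|,
\end{equation*}
and I would bound each coordinate maximum via Markov/BDG in $L^{q/2}$ using H\"older together with \eqref{equation:thm:lasso:moment:condition} and the $\epsilon$-moment, then union-bound over $\ell$. This produces the $[(\probn+\widehat\probn)\epsilon^6]^{1/6}R\sqrt{\ln(d/\delta)/N}$ term in \eqref{thm:lasso:optimality}. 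A parallel argument applied to the bilinear form $(z-x_*)^\top(\mathbf{\widehat\Sigma}-\mathbf{\Sigma})(z-x_*)$, reducing its supremum over $\Vert\mathbf{D}_3(z-x_*)\Vert_1\lesssim R$ to a maximum over the $d^2$ entries of $\mathbf{D}_3^{-1}(\mathbf{\widehat\Sigma}-\mathbf{\Sigma})\mathbf{D}_3^{-1}$ and again using the moment hypothesis, delivers the $R^2\ln(d/\delta)/N$ contribution.

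\textbf{Main obstacle.} The subtle point is controlling these empirical processes uniformly over an $\ell_1$-localized set with only finitely many moments available and with the $\sqrt{\ln(d/\delta)/N}$ rate that ignores ambient dimension. The key is to combine convexity of the processes with the fact that $\{\Vert\mathbf{D}_3 v\Vert_1\le 1\}$ has its extreme points on the $2d$ signed, rescaled axes, allowing a reduction from supremum to maximum over $O(d)$ (or $O(d^2)$) scalar averages, each controlled by $L^{q/2}$ Markov bounds. The small-ball condition \eqref{equation:thm:lasso:small:ball:prob} enters indirectly: it ensures that the localized set of empirical near-minimizers has controlled population-level $L^2$-radius, so that the localization afforded by Theorem~\ref{thm:convex:solution:approximation} is meaningful (i.e.\ the deviation is measured on a set of diameter compatible with the feasibility radius $R$ rather than blowing up). Assembling the high-probability events from the feasibility step, the small-ball lower bound, the cross term and the quadratic term via a union bound (absorbing all constants into $C_0,C_1$) completes the proof.
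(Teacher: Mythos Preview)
Your proposal has two genuine gaps.

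\textbf{The choice $y_*=0$ kills the optimality bound.} In Theorem~\ref{thm:convex:solution:approximation} you must take $t\ge t_0$ with $f(y_*)\le f(x_*)+t_0$, and the final optimality guarantee is $f(\widehat x)\le f(x_*)+t+t_2$. With $y_*=0$ you are forced to take $t_0\ge f(0)-f(x_*)=\probn y(\cdot)^2-f(x_*)$, which can be arbitrarily large compared to the claimed RHS of \eqref{thm:lasso:optimality}; your conclusion then degenerates to $f(\widehat x)\le f(0)$. The paper instead takes $y_*:=\frac{1-\alpha}{1+\alpha}x_*$, so that $\Vert\mathbf{D}_3(y_*-x_*)\Vert_1\vee\Vert\mathbf{\widehat D}_3(y_*-x_*)\Vert_1\lesssim\alpha R$ and therefore $t_0$ and $\hat\delta(y_*,x_*)$ are both of the order in \eqref{thm:lasso:optimality}. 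An interior point that is \emph{close to $x_*$} is essential.

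\textbf{Entry-wise control of the quadratic term gives the wrong rate and misreads the role of the small-ball assumption.} Reducing $\langle h,(\mathbf{\Sigma}-\mathbf{\widehat\Sigma})h\rangle$ to a maximum over the $d^2$ entries of $\mathbf{D}_3^{-1}(\mathbf{\widehat\Sigma}-\mathbf{\Sigma})\mathbf{D}_3^{-1}$ yields, after any Markov/BDG-type bound, a deviation of order $R^2\sqrt{\ln(d/\delta)/N}=R^2\alpha/C_0$, not the claimed $R^2\ln(d/\delta)/N$. The paper avoids this by using the small-ball condition \eqref{equation:thm:lasso:small:ball:prob} \emph{directly}: via the Lecu\'e--Mendelson sparse lower isometry (your Lemma~\textsf{Quad} analogue) one obtains $\langle h,\mathbf{\widehat\Sigma}h\rangle\ge\phi\langle h,\mathbf{\Sigma}h\rangle-C_3\tfrac{\ln(d/\delta)}{N}\Vert\mathbf{\widehat D}_3 h\Vert_1$, hence
\[
\langle h,(\mathbf{\Sigma}-\mathbf{\widehat\Sigma})h\rangle\le(1-\phi)\langle h,\mathbf{\Sigma}h\rangle+C_3\tfrac{\ln(d/\delta)}{N}\Vert\mathbf{\widehat D}_3 h\Vert_1.
\]
On the localized active set $\{f(x)-f(x_*)=t\}$ the KKT identity gives $\langle h,\mathbf{\Sigma}h\rangle\le t$, so the quadratic term contributes $(1-\phi)t+O(R\,\ln(d/\delta)/N)$; the $(1-\phi)t$ piece is then absorbed when solving \textbf{C1} for $t$ (you get $\phi t\ge\cdots$). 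This absorption mechanism---not an indirect bound on the $L^2$-radius of the empirical solution set---is precisely where \eqref{equation:thm:lasso:small:ball:prob} enters, and your argument does not reproduce it.
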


\subsection{Proof of Theorem \ref{thm:lasso}}
The proof of Theorem \ref{thm:lasso} will be derived as a consequence of the following four probabilistic Lemmas \ref{lemma:norm}-\ref{lemma:quad} and the verification that these lemmas imply, with high-probability, the conditions of Proposition \ref{lem:convexlocalized}. The proofs of such lemmas are postponed to Subsection \ref{subsection:proof:prob:lemmas}.

\begin{lemma}\label{lemma:norm}
Let $\alpha$ as in Theorem \ref{thm:lasso} with $C_0$ only depending on $C$. Then the event
$$
\mathsf{Norm}:=\{\Vert\mathbf{\widehat D}_{3}x_*\Vert_1\le(1+\alpha)^2R\},
$$
has probability $\prob(\mathsf{Norm})\ge1-\frac{\delta}{4}$.
\end{lemma}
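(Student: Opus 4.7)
The plan is to reduce the claim to a coordinatewise comparison of the empirical and population diagonals, and then to combine the moment comparison \eqref{equation:thm:lasso:moment:condition} with a high-moment deviation inequality. First I would use that $x_*$ is feasible for the population constraint, i.e.\ $\|\mathbf{D}_{3}x_*\|_1\le(1+\alpha)R$. Writing $\|\mathbf{\widehat D}_{3}x_*\|_1=\sum_{\ell=1}^{d}\mathbf{\widehat D}_{3}[\ell,\ell]|x_*[\ell]|$ and factoring out $\mathbf{D}_{3}[\ell,\ell]$ gives the elementary bound
$$\|\mathbf{\widehat D}_{3}x_*\|_1\le\left(\max_{\ell\in[d]}\frac{\mathbf{\widehat D}_{3}[\ell,\ell]}{\mathbf{D}_{3}[\ell,\ell]}\right)\|\mathbf{D}_{3}x_*\|_1,$$
so $\mathsf{Norm}$ is implied by the event $\mathsf{Diag}:=\{\mathbf{\widehat D}_{3}[\ell,\ell]\le(1+\alpha)\mathbf{D}_{3}[\ell,\ell]\text{ for all }\ell\in[d]\}$. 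Cubing and using $(1+\alpha)^{3}-1\ge3\alpha$, the event $\mathsf{Diag}$ is in turn implied by the event that $\widehat\probn|\mathbf{x}(\cdot)[\ell]|^{3}-\mu_\ell\le 3\alpha\mu_\ell$ for every $\ell\in[d]$, where $\mu_\ell:=\probn|\mathbf{x}(\cdot)[\ell]|^{3}$.

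Next I would exploit the moment comparison \eqref{equation:thm:lasso:moment:condition}. Setting $Z_\ell:=|\mathbf{x}(\cdot)[\ell]|^{3}$, it reads $\probn Z_\ell^{q/3}\le C^{q/3}\mu_\ell^{q/3}$, and Lyapunov's inequality then yields $\probn Z_\ell^{p}\le C^{p}\mu_\ell^{p}$ for every $p\in[1,q/3]$; in particular $\var(Z_\ell)\le C^{2}\mu_\ell^{2}$ and $\probn|Z_\ell-\mu_\ell|^{q/3}\le(2C\mu_\ell)^{q/3}$. Along the same lines as the argument in Remark \ref{remark:thm:SAA:exp:fixed:prob}, I would then apply a standard high-moment inequality for i.i.d.\ centered sums (Burkholder--Davis--Gundy or Rosenthal) to obtain an estimate of the form $\espn|\widehat\probn Z_\ell-\mu_\ell|^{q/3}\le c_{q}\mu_\ell^{q/3}/N^{q/6}$, where $c_{q}$ depends only on $q$ and $C$. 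Markov's inequality followed by a union bound over $\ell\in[d]$ would then give $\prob(\mathsf{Diag}^{c})\le d\,c_{q}/((3\alpha)^{q/3}N^{q/6})$.

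The last step is to show that this bound is at most $\delta/4$ under the two sample-size hypotheses of Theorem \ref{thm:lasso}. The choice $\alpha=C_{0}\sqrt{\ln(d/\delta)/N}$ is designed precisely so that $\alpha^{2}N=C_{0}^{2}\ln(d/\delta)$ absorbs the factor $d$ logarithmically, while the hypothesis $N\ge(1/\delta)^{1/(q/6-1)}$ is the moment-tail threshold associated to a tail term of order $1/(N\alpha)^{q/3-1}$. I expect the main obstacle to be this calibration, since a naive one-shot Rosenthal/Markov estimate produces a $(d/\delta)^{6/q}$-type dependence that is strictly stronger than the stated hypotheses. The natural remedy, familiar from Fuk--Nagaev type inequalities, is to refine the deviation step by truncating $Z_\ell$ at a level of order $\mu_\ell/\alpha$, applying Bernstein's inequality to the bounded part (whose subgaussian factor $\exp(-cN\alpha^{2})$ is what absorbs $d$ through $\ln(d/\delta)$) and controlling the discarded tail separately via the $(q/3)$-th moment (which is where the $(1/\delta)^{1/(q/6-1)}$ threshold arises). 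Up to choosing $C_{0}$ large enough in terms of $C$ and $q$, this should yield $\prob(\mathsf{Diag}^{c})\le\delta/4$, hence $\prob(\mathsf{Norm})\ge 1-\delta/4$ as claimed.
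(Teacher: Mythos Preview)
Your reduction to the coordinatewise event $\max_{\ell}\mathbf{\widehat D}_3[\ell,\ell]/\mathbf{D}_3[\ell,\ell]\le 1+\alpha$ is strictly stronger than what the lemma asks, and this extra strength is where the argument breaks. The truncation/Fuk--Nagaev split is correct in spirit, but the moment tail does \emph{not} fit under the stated hypotheses once you union-bound over $\ell\in[d]$. With $M_\ell\sim\mu_\ell/\alpha$, the tail contribution for a single coordinate is of order $N(C\alpha)^{q/3}\asymp(\ln(d/\delta))^{q/6}/N^{q/6-1}$; after the union over $d$ you therefore need $N^{q/6-1}\gtrsim d/\delta$, i.e.\ $N\gtrsim(d/\delta)^{1/(q/6-1)}$. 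The theorem only supplies $N\ge(1/\delta)^{1/(q/6-1)}$ together with $N\gtrsim\ln(d/\delta)$ (from $\alpha\le 1/2$). The Bernstein part does absorb the factor $d$ logarithmically, as you note, but the polynomial-tail part cannot: no choice of truncation level simultaneously keeps the Bernstein exponent at order $N\alpha^2$ and makes the tail survive a union over $d$ with only the stated sample size. So the calibration you outline in the last paragraph does not close.

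The paper sidesteps this by \emph{not} proving a uniform diagonal comparison. Since $x_*$ is deterministic (it is defined through $\probn$), one sets the unit-$\ell_1$ weight vector $z:=\mathbf{D}_3x_*/\Vert\mathbf{D}_3x_*\Vert_1$ and uses convexity of $t\mapsto t^3$ to obtain
\[
\left(\frac{\Vert\mathbf{\widehat D}_3x_*\Vert_1}{\Vert\mathbf{D}_3x_*\Vert_1}\right)^{3}-1
\;\le\;\widehat\probn g(\cdot),
\qquad
g(\xi):=\sum_{\ell=1}^d|z[\ell]|\,\frac{|\mathbf{x}(\xi)[\ell]|^3-\probn|\mathbf{x}(\cdot)[\ell]|^3}{\probn|\mathbf{x}(\cdot)[\ell]|^3}.
\]
This collapses the question to a \emph{single} centered scalar random variable, to which the self-normalized inequality of Lemma~\ref{lemma:concentration:ineq:self:norm} applies directly; no coordinatewise union bound enters the main concentration step. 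Condition~\eqref{equation:thm:lasso:moment:condition} then bounds $\probn g^2$ by a constant depending only on $C$, and the threshold $N\ge(1/\delta)^{1/(q/6-1)}$ is used only to control the \emph{empirical} variance $\widehat\probn g^2$ via a BDG/Markov step on the sixth-moment terms. The essential idea you are missing is to exploit the specific vector $x_*$ to form a convex combination, rather than to aim for a uniform-in-$\ell$ statement.
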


\begin{lemma}\label{lemma:diag}
Let $\alpha\in(0,\frac{1}{2}]$ be defined as in Theorem \ref{thm:lasso}. Then, by enlarging $C_0$ (as a function only of $C$), the event
$$
\mathsf{Diag}:=\bigcap_{\ell=1}^d\left\{\widehat\probn|\mathbf{x}(\cdot)[\ell]|^3\ge\frac{1}{(1+\alpha)^3}\probn|\mathbf{x}(\cdot)[\ell]|^3\right\},
$$
has probability $\prob(\mathsf{Diag})\ge1-\frac{\delta}{4}$.
\end{lemma}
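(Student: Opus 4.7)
The plan is to reduce the multiplicative guarantee to a coordinate-wise additive lower-tail deviation, apply a truncation plus Bernstein argument, and union bound. Since $\alpha \in (0,1/2]$, the function $g(\alpha) := 1 - (1+\alpha)^{-3}$ satisfies $g'(\alpha) = 3(1+\alpha)^{-4} \le 3$, hence $g(\alpha) \le 3\alpha$; consequently $\mathsf{Diag}$ is implied, for every $\ell \in [d]$, by $\widehat\probn Z_\ell \ge (1 - 3\alpha) \mu_\ell$, where $Z_\ell := |\mathbf{x}(\cdot)[\ell]|^3 \ge 0$ and $\mu_\ell := \probn Z_\ell$. Applying Lyapunov's inequality to the hypothesis \eqref{equation:thm:lasso:moment:condition} yields $(\probn |\mathbf{x}(\cdot)[\ell]|^r)^{1/r} \le C \mu_\ell^{1/3}$ for every $r \in [3,q]$; specialising to $r = 6$ and $r = q$ (using $q \ge 9$) gives $\probn Z_\ell^2 \le C^6 \mu_\ell^2$ and $\probn Z_\ell^{q/3} \le C^q \mu_\ell^{q/3}$, so $Z_\ell$ has a moment of order $q/3 \ge 3$ with a bound scaling polynomially in its mean.

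Fix $\ell$, set $\eta := 3\alpha$, and consider the truncation $\min(Z_\ell, M)$ at level $M := M_0 \mu_\ell$, with $M_0 := (2 C^q / \eta)^{3/(q-3)}$. Markov's inequality applied to the $(q/3)$-th moment of $Z_\ell \mathbf{1}\{Z_\ell > M\}$ bounds the truncation bias by $(\eta/2) \mu_\ell$. Since $q > 6$, the identity $M_0 \eta = (2 C^q)^{3/(q-3)} \eta^{(q-6)/(q-3)}$ shows that $M_0 \eta$ stays bounded by a constant $K_{C,q}$ throughout the range $\eta \le 3/2$. Bernstein's inequality for the bounded, non-negative i.i.d.\ average $\widehat\probn \min(Z_\ell, M)$, whose per-sample variance is at most $C^6 \mu_\ell^2$, then gives
\[
\probn\bigl\{\widehat\probn \min(Z_\ell, M) < \probn \min(Z_\ell, M) - (\eta/2) \mu_\ell\bigr\} \le \exp\bigl(-c_{C,q}\, N \eta^2\bigr),
\]
for a positive constant $c_{C,q}$. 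With $\eta = 3 C_0 \sqrt{\ln(d/\delta)/N}$ and $C_0$ enlarged as a function of $C$ and $q$, the right-hand side is at most $\delta/(4d)$. Combining with $\widehat\probn Z_\ell \ge \widehat\probn \min(Z_\ell, M)$ and the bias bound yields $\widehat\probn Z_\ell \ge (1-\eta) \mu_\ell$ for this $\ell$ with probability at least $1 - \delta/(4d)$; a union bound over $\ell \in [d]$ closes the argument.

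The main obstacle is that Markov or Chebyshev estimates only give polynomial-in-$d/\delta$ concentration for heavy-tailed non-negative averages, whereas the target rate $\alpha \asymp \sqrt{\ln(d/\delta)/N}$ is sub-Gaussian-like. The truncation level $M_0 \mu_\ell$ is chosen so that the bias is absorbed by the target deviation while the variance term dominates the Bernstein bound ($M_0 \eta$ stays $O(1)$ in $d, \delta, N$); this balance is precisely what the hypothesis $q > 6$ enables. One then only needs to verify that, under the side conditions $N \ge (1/\delta)^{1/(q/6-1)}$ and $\alpha \le 1/2$, all constants can be absorbed into $C_0$.
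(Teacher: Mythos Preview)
Your overall strategy is sound, but the opening reduction has the inequality backwards. You prove $g(\alpha)=1-(1+\alpha)^{-3}\le 3\alpha$, i.e.\ $(1+\alpha)^{-3}\ge 1-3\alpha$; thus $\widehat\probn Z_\ell\ge(1-3\alpha)\mu_\ell$ is \emph{weaker} than the target $\widehat\probn Z_\ell\ge(1+\alpha)^{-3}\mu_\ell$, not a sufficient condition for it. The repair is immediate: on $(0,\tfrac12]$ one also has $g'(\alpha)=3(1+\alpha)^{-4}\ge 3(3/2)^{-4}>\tfrac12$, whence $g(\alpha)\ge\alpha/2$, so running your truncation--Bernstein argument with $\eta:=\alpha/2$ (or directly $\eta:=g(\alpha)$) in place of $3\alpha$ works verbatim up to constants.

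With that fix the proof is correct but takes a different route from the paper. The paper avoids truncation entirely: it invokes a direct sub-Gaussian \emph{lower}-tail inequality for i.i.d.\ averages of nonnegative variables (Lemma~\ref{lemma:sub-gaussian:lower:tail}), obtained from the Chernoff method together with the pointwise bound $e^{-x}\le 1-x+x^2/2$ for $x\ge 0$. Nonnegativity is exactly what makes $e^{-\theta Z_\ell}$ automatically bounded, so no cutoff is needed. That lemma consumes only the second moment of $Z_\ell$ (equivalently the sixth moment of $\mathbf{x}(\cdot)[\ell]$), and hence the enlargement of $C_0$ depends on $C$ alone, matching the lemma's parenthetical. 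Your route uses the full $q$-th moment and produces a $C_0$ depending also on $q$---harmless for Theorem~\ref{thm:lasso}, but slightly looser than stated here. Incidentally, had you controlled the truncation bias via the \emph{second} moment of $Z_\ell$, namely $\probn[Z_\ell\mathbf{1}\{Z_\ell>M\}]\le \probn Z_\ell^{2}/M$, you would get $M_0\eta=2C^6$ and recover a constant depending only on $C$ as well.
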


\begin{lemma}\label{lemma:grad}
By enlarging $C_0$ (as a function only of $C$ as stated in Theorem \ref{thm:lasso}), there exists a constant $C_2>0$, depending only on $C$, such that the event
$$
\mathsf{Grad}:=\left\{\left\Vert\mathbf{\widehat D}_3^{-1}(\probn-\widehat\probn)\epsilon(\cdot)\mathbf{x}(\cdot)\right\Vert_\infty\le C_2\sqrt{\frac{\ln(d/\delta)}{N}}\sqrt[6]{(\probn+\widehat\probn)\epsilon(\cdot)^6}\right\},
$$
has probability $\prob(\mathsf{Grad})\ge1-\frac{\delta}{4}$.
\end{lemma}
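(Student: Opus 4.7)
The plan is to union-bound over coordinates and, for each coordinate, apply a heavy-tailed concentration inequality (the self-normalized Lemma \ref{lemma:concentration:ineq:self:norm} of the Appendix, of the same flavor as used in the proofs of Theorems \ref{thm:exterior:approx:SAA}--\ref{thm:interior:approx:SAA}). For $\ell \in [d]$, write $Z_\ell(\xi) := \epsilon(\xi)\mathbf{x}(\xi)[\ell]$, so that the $\ell$-th coordinate of the vector in the definition of $\mathsf{Grad}$ equals $(\probn - \widehat\probn)Z_\ell / \sqrt[3]{\widehat\probn|\mathbf{x}[\ell]|^3}$. The exponent $1/6$ on $\epsilon^6$ appearing in the target bound is produced by a single application of H\"older's inequality with exponents $(3, 3/2)$ to $\epsilon^2\cdot\mathbf{x}[\ell]^2$, giving
\[
\sqrt{\probn Z_\ell^2} \le (\probn\epsilon^6)^{1/6}(\probn|\mathbf{x}[\ell]|^3)^{1/3},
\quad
\sqrt{\widehat\probn Z_\ell^2} \le (\widehat\probn\epsilon^6)^{1/6}(\widehat\probn|\mathbf{x}[\ell]|^3)^{1/3}.
\]
Observe that the $\widehat\probn$-side factor $(\widehat\probn|\mathbf{x}[\ell]|^3)^{1/3}$ will cancel exactly against the denominator $\sqrt[3]{\widehat\probn|\mathbf{x}[\ell]|^3}$, while on the population side a ratio $(\probn|\mathbf{x}[\ell]|^3 / \widehat\probn|\mathbf{x}[\ell]|^3)^{1/3}$ remains and must be controlled separately.

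I would then apply Lemma \ref{lemma:concentration:ineq:self:norm} to $g = Z_\ell$ at confidence level $\delta/(8d)$ and union-bound over $\ell \in [d]$, obtaining with probability $\ge 1-\delta/8$ that, uniformly in $\ell$,
\[
|(\probn - \widehat\probn)Z_\ell| \le 2\sqrt{\frac{1+\ln(8d/\delta)}{N}}\sqrt{\widehat\probn Z_\ell^2 + \probn Z_\ell^2}.
\]
The $q \ge 9$ moment hypothesis and the third-moment comparison \eqref{equation:thm:lasso:moment:condition} supply the higher integrability of $Z_\ell$ required by the self-normalized inequality, while the sample-size threshold $N \ge (1/\delta)^{1/(q/6-1)}$ stated in Theorem \ref{thm:lasso} is precisely what converts the polynomial $\delta$-decay of the heavy-tailed tails into the logarithmic form above, exactly as in the discussion of Remark \ref{remark:thm:SAA:exp:fixed:prob}. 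In parallel, a Markov-moment argument identical in spirit to that of Lemma \ref{lemma:diag}, applied to the random variable $|\mathbf{x}[\ell]|^3$ (whose $q/3$-th moment is finite and controlled by \eqref{equation:thm:lasso:moment:condition}), yields with probability $\ge 1-\delta/8$ and uniformly in $\ell$ the ratio bound $(\probn|\mathbf{x}[\ell]|^3 / \widehat\probn|\mathbf{x}[\ell]|^3)^{1/3} \le 2$.

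On the intersection of these two events (total probability at least $1-\delta/4$), dividing the deviation inequality through by $\sqrt[3]{\widehat\probn|\mathbf{x}[\ell]|^3}$ and inserting the H\"older bounds collapses the variance factor to at most $(\widehat\probn\epsilon^6)^{1/6} + 2(\probn\epsilon^6)^{1/6} \lesssim ((\probn + \widehat\probn)\epsilon^6)^{1/6}$, yielding the target bound for $\mathsf{Grad}$ with constant $C_2$ depending only on $C$ and $q$ through the constants in the self-normalized inequality and in \eqref{equation:thm:lasso:moment:condition}. The principal obstacle will be the interplay between heavy-tailed concentration under only finitely many moments and the union over the ambient dimension $d$: a na\"ive Markov bound would yield polynomial $\delta$-decay that cannot absorb the union over $d$ coordinates. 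It is exactly the combination of the Appendix's self-normalized inequality, the third-moment comparison \eqref{equation:thm:lasso:moment:condition} and the mild sample-size floor that rescues the logarithmic $\ln(d/\delta)$ scaling under these minimal assumptions on the data-generating distribution.
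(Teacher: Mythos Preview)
Your approach is essentially the paper's: apply the self-normalized inequality (Lemma \ref{lemma:concentration:ineq:self:norm}) coordinatewise to $Z_\ell=\epsilon\,\mathbf{x}[\ell]$ with a union bound over $\ell\in[d]$, use H\"older with exponents $(3,3/2)$ to split the variance into an $\epsilon^6$ factor and an $|\mathbf{x}[\ell]|^3$ factor, and control the remaining ratio $\probn|\mathbf{x}[\ell]|^3/\widehat\probn|\mathbf{x}[\ell]|^3$ by the same lower-tail device as in Lemma \ref{lemma:diag}. Two small corrections to your explanation (neither breaks the argument): (i) Lemma \ref{lemma:concentration:ineq:self:norm} already delivers an exponential tail $2e^{-t}$ under only a second-moment assumption, so the sample-size floor $N\ge(1/\delta)^{1/(q/6-1)}$ is \emph{not} what produces the logarithmic $\ln(d/\delta)$ here---that floor is used elsewhere (Lemma \ref{lemma:norm}); (ii) the lower-tail step in Lemma \ref{lemma:diag} is a Chernoff-type bound (Lemma \ref{lemma:sub-gaussian:lower:tail}), not a Markov/Chebyshev argument---a genuine Markov bound would give failure probability of order $d/N$, which cannot be made $\le\delta/8$ with $N$ only logarithmic in $d$.
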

\begin{lemma}\label{lemma:quad}
By enlarging $C_0$ (as a function of $C$, $u$ and $p$ as stated in Theorem \ref{thm:lasso}), there exists constants $C_3>0$ and $\phi\in(0,1)$, depending only on $C$, $u$ and $p$, such that the event
$$
\mathsf{Quad}:=\left\{\forall v\in\re^d, \langle v,\mathbf{\widehat{\Sigma}}v\rangle\ge\phi\langle v,\mathbf{\Sigma}v\rangle-C_3\frac{\ln(d/\delta)}{N}\Vert\mathbf{\widehat D}_3v\Vert_1\right\},
$$
has probability $\prob(\mathsf{Quad})\ge1-\frac{\delta}{4}$.
\end{lemma}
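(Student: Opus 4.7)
The plan is to adapt Mendelson's \emph{small-ball method} to produce a one-sided lower bound on $\langle v,\mathbf{\widehat\Sigma}v\rangle$ that is uniform in $v\in\re^{d}$. Writing $\sigma(v):=\sqrt{\langle v,\mathbf{\Sigma}v\rangle}$, fix a $1$-Lipschitz bump $\psi:\re\to[0,1]$ with $\psi(t)=0$ for $|t|\le 1$ and $\psi(t)=1$ for $|t|\ge 2$, and set $\tau_{v}:=u\sigma(v)/2$. The pointwise inequality $t^{2}\ge\tau_{v}^{2}\psi(t/\tau_{v})$ averaged over the sample gives
\[
\langle v,\mathbf{\widehat\Sigma}v\rangle\ge\tau_{v}^{2}\,\widehat\probn\,\psi(\langle v/\tau_{v},\mathbf{x}(\cdot)\rangle),
\]
while the small-ball hypothesis \eqref{equation:thm:lasso:small:ball:prob} yields $\probn\,\psi(\langle v/\tau_{v},\mathbf{x}\rangle)\ge\probn\{|\langle v,\mathbf{x}\rangle|\ge u\sigma(v)\}\ge p$. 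The task is thus reduced to controlling the empirical deviation $(\widehat\probn-\probn)\psi(\langle v/\tau_{v},\mathbf{x}\rangle)$ uniformly in $v$.

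Exploiting $2$-homogeneity, the integrand depends on $v$ only through $w:=v/\tau_{v}$, which varies over the ``$\sigma$-sphere'' $\{w:\sigma(w)=2/u\}$; in particular $\psi$ now acts with a \emph{uniform} Lipschitz constant. Symmetrization followed by Talagrand's contraction principle and $\ell_{1}/\ell_{\infty}$ duality bounds the expected supremum by a constant multiple of $\Vert\mathbf{\widehat D}_{3}w\Vert_{1}\,\esp\,M_{N}$, where
\[
M_{N}:=\Bigl\Vert\mathbf{\widehat D}_{3}^{-1}\,\tfrac{1}{N}\sum_{j=1}^{N}\varepsilon_{j}\mathbf{x}(\xi_{j})\Bigr\Vert_{\infty}
\]
for i.i.d. Rademacher signs $\{\varepsilon_{j}\}$. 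On the event $\mathsf{Diag}$ of Lemma \ref{lemma:diag} the matrix $\mathbf{\widehat D}_{3}^{-1}$ is comparable to $\mathbf{D}_{3}^{-1}$, each coordinate $\mathbf{D}_{3}[\ell,\ell]^{-1}\mathbf{x}(\cdot)[\ell]$ has unit third moment, and \eqref{equation:thm:lasso:moment:condition} bounds its $q$th moment by $C^{q}$. A maximal Bernstein inequality over the $d$ coordinates combined with a polynomial Markov step -- identical in spirit to Remark \ref{remark:thm:SAA:exp:fixed:prob} and to the argument used for the gradient in Lemma \ref{lemma:grad}, and the reason for the sample-size requirement $N\ge\delta^{-1/(q/6-1)}$ -- yields $M_{N}\lesssim\sqrt{\ln(d/\delta)/N}$ with high probability; a Bousquet-type concentration inequality for the empirical process upgrades the resulting expectation bound on the deviation to a high-probability statement of the same order.

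Unwinding $w=v/\tau_{v}$ gives, on an event of probability $\ge 1-\delta/4$,
\[
\langle v,\mathbf{\widehat\Sigma}v\rangle\ge p\tau_{v}^{2}-C'\tau_{v}\Vert\mathbf{\widehat D}_{3}v\Vert_{1}\sqrt{\ln(d/\delta)/N}
\]
uniformly in $v\ne 0$, and a single application of Young's inequality $ab\le\phi a^{2}+b^{2}/(4\phi)$ with $a=\sigma(v)$ absorbs the cross term into the leading $\sigma(v)^{2}$ piece and produces the desired lower bound with $\phi:=u^{2}p/8$ and $C_{3}$ depending only on $u$, $p$ and $C$. I expect the delicate point to be the contraction step: because $\tau_{v}$ is $v$-dependent, a naive Rademacher symmetrization of $\psi_{\tau_{v}}(\langle v,\mathbf{x}\rangle)$ is not valid, and the homogenization $w=v/\tau_{v}$ (equivalently, a peeling argument over dyadic scales of $\sigma(v)$) is what supplies the uniform Lipschitz constant needed for the standard contraction inequality.
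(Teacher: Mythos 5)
Your route is genuinely different from the paper's. The paper does not run the small-ball method itself: it quotes Corollary 2.5(2) of Lecu\'e--Mendelson as a black box to obtain the \emph{restricted} lower bound $\langle h,\mathbf{\widehat\Sigma}h\rangle\ge\phi\langle h,\mathbf{\Sigma}h\rangle$ for all $s$-sparse $h$ with $\phi=1\wedge(u^2p/2)$ on an event of probability $1-e^{-c_2p^2N}$, and then applies Oliveira's deterministic ``transfer principle'' (Lemma 5.1 of \cite{oliveira2013}), which extends the sparse bound to all $v\in\re^d$ at the cost of an additive $\ell_1$ penalty controlled by $\Vert\mathbf{\widehat D}_3v\Vert_1$ divided by $s-1$; choosing $s\asymp N/\ln(d/\delta)$ gives the stated constant. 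Your symmetrization--contraction--duality argument is essentially the ``from scratch'' content of those two citations: it buys a self-contained proof, while the paper's version buys brevity and outsources the empirical-process work.

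As written, though, your argument has two concrete gaps. First, the homogeneity of your conclusion does not match the statement: Young's inequality turns the cross term $C'\tau_v\Vert\mathbf{\widehat D}_3v\Vert_1\sqrt{\ln(d/\delta)/N}$ into $\phi\,\sigma(v)^2+C_3\Vert\mathbf{\widehat D}_3v\Vert_1^{2}\,\ln(d/\delta)/N$, with the $\ell_1$ norm \emph{squared}, whereas the lemma has it to the first power. (The displayed lemma is itself dimensionally inconsistent --- the main terms are $2$-homogeneous and the error term is $1$-homogeneous, and Oliveira's transfer principle actually produces a squared $\ell_1$ norm --- so this is very likely a typo in the paper, and the quadratic error still feeds correctly into the $R^2\ln(d/\delta)/N$ term of \eqref{thm:lasso:optimality}; but you must say explicitly that you prove the $2$-homogeneous variant rather than claim the displayed inequality follows.) Second, the uniformity over $v$ is unfinished: after homogenizing to $\sigma(w)=2/u$, the quantity $\Vert\mathbf{\widehat D}_3w\Vert_1$ is still unbounded on that sphere, so your contraction/duality bound --- a bound on the expected supremum over a \emph{fixed} class $\{w:\sigma(w)=2/u,\ \Vert\mathbf{D}_3w\Vert_1\le\rho\}$ --- must be made simultaneous over all dyadic $\rho$ by peeling in the $\ell_1$ radius, with $\delta$ split over the levels. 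Your remark that the peeling runs over dyadic scales of $\sigma(v)$ points at the wrong parameter: that scale is handled exactly by $2$-homogeneity, and it is the $\ell_1$ radius that needs peeling. Relatedly, the index set must be defined through the deterministic norm $\Vert\mathbf{D}_3w\Vert_1$ (passing to $\Vert\mathbf{\widehat D}_3w\Vert_1$ afterwards via $\mathsf{Diag}$), since symmetrization and contraction are not valid for a sample-dependent function class; and ``maximal Bernstein'' should be replaced by the Panchenko-type self-normalized bound already used in Lemma \ref{lemma:grad}, since only $q$th moments are assumed. With these repairs the approach does go through.
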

	
Granting the above lemmas, our proof strategy is to prove the following claim:

\begin{quote}	
\textbf{Claim}: Whenever the events $\mathsf{Norm}$, $\mathsf{Diag}$, $\mathsf{Quad}$ and $\mathsf{Grad}$ all take place, the inequalities \eqref{thm:lasso:feasibility}-\eqref{thm:lasso:optimality} are valid.
\end{quote}
	
\emph{From this point on we will assume that $\mathsf{Norm}\cap\mathsf{Diag}\cap\mathsf{Quad}\cap\mathsf{Grad}$ takes place} with the constants $C_0$, $C_1$ and chosen $R$, $\alpha$, $N$ and $\delta$ as stated in Theorem \ref{thm:lasso} (leaving their proof to Subsection \ref{subsection:proof:prob:lemmas}). Our goal is then to prove the \textbf{Claim} since it implies that
$$
\prob\left\{\mbox{\eqref{thm:lasso:feasibility} and \eqref{thm:lasso:optimality} hold}\right\}\ge\prob\left\{\mathsf{Norm}\cap\mathsf{Diag}\cap\mathsf{Quad}\cap\mathsf{Grad}\right\}\ge1-\delta,
$$
which in turn implies Theorem \ref{thm:lasso}. We will prove \textbf{Claim} by verifying the conditions of the Proposition \ref{lem:convexlocalized}, which is an deterministic perturbation result. 

In our case we take $Y:=\re^d$ and let 
\begin{eqnarray*}
f(x)&:=&\probn(y(\cdot)-\langle\mathbf{x}(\cdot),x\rangle)^2,\\
f_1(x)&:=&\Vert\mathbf{D}_3x\Vert_1-(1+\alpha)R,\quad\quad(x\in\re^d)
\end{eqnarray*}
be, respectively, the objective and the unique soft constraint. We thus have $X:=\{x\in\re^d:\Vert\mathbf{D}_3x\Vert_1\le(1+\alpha)R\}$. Our noisy objective and constraint are, respectively,
\begin{eqnarray*}
\widehat F(x)&:=&\widehat\probn(y(\cdot)-\langle\mathbf{x}(\cdot),x\rangle)^2,\\
\widehat F_1(x)&:=&\Vert\mathbf{\widehat D}_3x\Vert_1-(1+\alpha)R,\quad\quad(x\in\re^d),
\end{eqnarray*}
with the tuning parameter
$
\hat\delta:=-\alpha R,
$
so that
$$
\widehat X:=\{x\in\re^d:\Vert\mathbf{\widehat D}_3x\Vert_1\le R\}.
$$

To continue, we state the following formulae obtained from the first-order condition of problem $\min_X f$.
\begin{lemma}\label{lemma:KKT}
The following identities hold:
\begin{eqnarray*}
f(x)-f(x_*)&=&\langle(x-x_*),\mathbf{\Sigma}(x-x_*)\rangle-2\langle\probn\epsilon(\cdot)\mathbf{x}(\cdot),x-x_*\rangle,\\
\widehat F(x)-\widehat F(x_*)&=&\langle(x-x_*),\mathbf{\widehat\Sigma}(x-x_*)\rangle-2\langle\widehat\probn\epsilon(\cdot)\mathbf{x}(\cdot),x-x_*\rangle.
\end{eqnarray*}
Moreover, for all $x\in X$,
\begin{eqnarray*}
\langle\probn\epsilon(\cdot)\mathbf{x}(\cdot),x-x_*\rangle\le0,\quad\quad
\langle(x-x_*),\mathbf{\Sigma}(x-x_*)\rangle\le f(x)-f(x_*).
\end{eqnarray*}
\end{lemma}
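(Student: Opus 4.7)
The proof is essentially a direct computation combined with the first-order optimality condition for a convex minimization over a convex set, so I would not expect any genuine obstacle; it is really a bookkeeping exercise.

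The plan is to first establish the two quadratic identities by an algebraic expansion of the loss. Write $y(\xi)-\langle x,\mathbf{x}(\xi)\rangle=\epsilon(\xi)-\langle x-x_*,\mathbf{x}(\xi)\rangle$, where the identity uses only the definition $\epsilon(\xi)=y(\xi)-\langle x_*,\mathbf{x}(\xi)\rangle$. Squaring gives
\[
(y(\xi)-\langle x,\mathbf{x}(\xi)\rangle)^2=\epsilon(\xi)^2-2\epsilon(\xi)\langle x-x_*,\mathbf{x}(\xi)\rangle+\langle x-x_*,\mathbf{x}(\xi)\rangle^2.
\]
Applying $\mathbf{P}$ to both sides and using the definitions $f(x_*)=\mathbf{P}\epsilon(\cdot)^2$ and $\langle x-x_*,\mathbf{\Sigma}(x-x_*)\rangle=\mathbf{P}\langle x-x_*,\mathbf{x}(\cdot)\rangle^2$ (from \eqref{def:population:matrix}) immediately yields the first identity. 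The second identity follows by the identical calculation with $\widehat{\mathbf{P}}$ and $\widehat{\mathbf{\Sigma}}$ replacing $\mathbf{P}$ and $\mathbf{\Sigma}$.

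Next, for the first inequality, I would invoke the first-order optimality condition for convex minimization. Since $x_*$ minimizes the convex function $f$ over the convex set $X=\{x\in\re^d:\Vert\mathbf{D}_3 x\Vert_1\le(1+\alpha)R\}$, the variational inequality $\langle\nabla f(x_*),x-x_*\rangle\ge0$ holds for every $x\in X$. Differentiating the identity just proved (or directly from $f(x)=\mathbf{P}(y(\cdot)-\langle\mathbf{x}(\cdot),x\rangle)^2$) gives $\nabla f(x_*)=-2\mathbf{P}\epsilon(\cdot)\mathbf{x}(\cdot)$, and substituting this in the variational inequality produces $\langle\mathbf{P}\epsilon(\cdot)\mathbf{x}(\cdot),x-x_*\rangle\le0$, as claimed.

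Finally, the last inequality is simply a combination of the previous two. Substituting $-2\langle\mathbf{P}\epsilon(\cdot)\mathbf{x}(\cdot),x-x_*\rangle\ge0$ into the quadratic identity for $f(x)-f(x_*)$ yields $f(x)-f(x_*)\ge\langle(x-x_*),\mathbf{\Sigma}(x-x_*)\rangle$. The whole argument is short and the only thing to be careful with is checking that the gradient computation is correct and that $x_*\in X$ (which is immediate from its definition in Theorem~\ref{thm:lasso}).
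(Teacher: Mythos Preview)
Your proposal is correct and follows essentially the same approach as the paper: expand the loss via $y(\xi)-\langle x,\mathbf{x}(\xi)\rangle=\epsilon(\xi)-\langle x-x_*,\mathbf{x}(\xi)\rangle$ to get the two identities, then invoke the first-order optimality condition for $x_*$ over the convex set $X$ to obtain $\langle\probn\epsilon(\cdot)\mathbf{x}(\cdot),x-x_*\rangle\le0$, and finally combine this with the first identity to get the quadratic bound. Your write-up is in fact slightly cleaner than the paper's, since you compute $\nabla f(x_*)=-2\probn\epsilon(\cdot)\mathbf{x}(\cdot)$ explicitly and plug it into the variational inequality directly.
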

\begin{proof}	
To prove the first identity, just note that, by definition of $\epsilon(\xi)$, we have that $f(x)-f(x_*)=\probn(\epsilon(\cdot)+\langle\mathbf{x}(\cdot),x_*-x\rangle)^2-\probn\epsilon(\cdot)^2$. To finish, just expand such relation and use that $\langle v,\mathbf{\Sigma}v\rangle=\probn\langle\mathbf{x(\cdot)},v\rangle^2$ for any $v\in\re^d$. The proof is analogous for the second identity replacing the measure $\probn$ by $\widehat\probn$. We omit the details.

From the first identity in the statement, we conclude that the first-order condition of problem $\min_Xf$ is:
$$
\langle2\mathbf{\Sigma}(x-x_*)-2\probn\epsilon(\cdot)\mathbf{x}(\cdot),x-x_*\rangle\ge0,\quad\forall x\in X.
$$
The above relation and, again, the first identity in the statement prove the inequalities stated in the lemma.\qed
\end{proof}

It will be useful to bound the quantities in the previous lemma in terms of norms of $\mathbf{D}_3x$ and $\mathbf{\widehat D}_3x$ for given $x\in\re^d$. This is the content of the next lemma.
\begin{lemma}\label{lemma:bounds:in:terms:D3x}
For all $x\in\re^d$,
\begin{eqnarray*}
\langle x,\mathbf{\Sigma}x\rangle &\le &\Vert\mathbf{D}_3x\Vert_1^2,\\
\langle x,\mathbf{\widehat\Sigma}x\rangle &\le &\Vert\mathbf{\widehat D}_3x\Vert_1^2,\\
\langle\probn\epsilon(\cdot)\mathbf{x}(\cdot),x\rangle &\le &\sqrt[6]{\probn\epsilon(\cdot)^6}\cdot\Vert\mathbf{D}_3x\Vert_1.
\end{eqnarray*}
\end{lemma}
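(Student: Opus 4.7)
The three bounds are all straightforward manipulations of $L^p$ norms with respect to either $\probn$ or $\widehat\probn$, and I would treat them in the obvious order. The plan is to exploit that, on a probability measure, $L^p$-norms are monotone in $p$ (Jensen's inequality) and the triangle inequality holds in every $L^p$. The main tools are Minkowski's inequality for the quadratic terms and Hölder/Cauchy--Schwarz for the linear term. I do not anticipate a substantive obstacle; the point of the lemma is purely bookkeeping so that the later proof of the \textbf{Claim} has a ``$\Vert\mathbf{D}_3\cdot\Vert_1$-style'' geometry compatible with the LASSO constraint.

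For the first inequality, observe from the definition of $\mathbf{\Sigma}$ that $\langle x,\mathbf{\Sigma}x\rangle=\probn\langle\mathbf{x}(\cdot),x\rangle^2$, i.e.\ $\sqrt{\langle x,\mathbf{\Sigma}x\rangle}$ is the $L^2(\probn)$-norm of $\sum_\ell x[\ell]\mathbf{x}(\cdot)[\ell]$. By Minkowski's inequality this is at most $\sum_\ell|x[\ell]|\,\Vert\mathbf{x}(\cdot)[\ell]\Vert_{L^2(\probn)}$, and since $\probn$ is a probability measure, Jensen yields $\Vert\mathbf{x}(\cdot)[\ell]\Vert_{L^2(\probn)}\le\Vert\mathbf{x}(\cdot)[\ell]\Vert_{L^3(\probn)}=\mathbf{D}_3[\ell,\ell]$. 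Squaring gives $\langle x,\mathbf{\Sigma}x\rangle\le\Vert\mathbf{D}_3x\Vert_1^2$. The second inequality is identical after replacing $\probn$ by $\widehat\probn$ throughout (which is also a probability measure, so Jensen still applies).

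For the third inequality, I expand
\[
\langle\probn\epsilon(\cdot)\mathbf{x}(\cdot),x\rangle=\sum_{\ell=1}^d x[\ell]\,\probn\bigl[\epsilon(\cdot)\,\mathbf{x}(\cdot)[\ell]\bigr],
\]
bound each term by $|x[\ell]|$ times $|\probn[\epsilon(\cdot)\,\mathbf{x}(\cdot)[\ell]]|$, apply Cauchy--Schwarz in $L^2(\probn)$ to get $\Vert\epsilon\Vert_{L^2(\probn)}\,\Vert\mathbf{x}(\cdot)[\ell]\Vert_{L^2(\probn)}$, and finally use Jensen twice to replace $L^2(\probn)$-norms by $L^6(\probn)$ and $L^3(\probn)$ norms, respectively. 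Summing over $\ell$ gives $\sqrt[6]{\probn\epsilon(\cdot)^6}\,\Vert\mathbf{D}_3x\Vert_1$, as required. (Alternatively, one can apply Hölder with conjugate exponents $6$ and $6/5$ directly, then use Jensen to pass from $L^{6/5}$ to $L^3$ in the $\mathbf{x}(\cdot)[\ell]$ factor; both routes give the same constant.) No delicate step is involved, so the proof is just a careful assembly of these standard inequalities.
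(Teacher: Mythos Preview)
Your proof is correct and follows essentially the same strategy as the paper: both use only Cauchy--Schwarz/H\"older and Jensen on the probability measures $\probn$ and $\widehat\probn$. The presentation differs slightly: for the first inequality the paper writes $\langle x,\mathbf{\Sigma}x\rangle$ as the bilinear form $\sum_{\ell,k}z[\ell]z[k]\,(\mathbf{D}_3^{-1}\mathbf{\Sigma}\mathbf{D}_3^{-1})[\ell,k]$ with $z:=\mathbf{D}_3x$ and bounds each normalized entry by $1$ via Cauchy--Schwarz and Jensen, whereas you use Minkowski on $\Vert\sum_\ell x[\ell]\mathbf{x}(\cdot)[\ell]\Vert_{L^2(\probn)}$ followed by Jensen from $L^2$ to $L^3$; your route is arguably more direct. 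For the third inequality the paper applies H\"older with exponents $3/2$ and $3$ (so the $\mathbf{D}_3$-denominator cancels exactly) and then Jensen from $L^{3/2}$ to $L^6$, while you use Cauchy--Schwarz and then two applications of Jensen; both yield the same constant, as you note.
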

\begin{proof}
In order to prove the first inequality, we let $z:=\mathbf{D}_3x$ and note that
\begin{eqnarray*}
\langle x,\mathbf{\mathbf{\Sigma}}x\rangle =\langle z,\mathbf{D}_3^{-1}\mathbf{\Sigma}\mathbf{D}_3^{-1}z\rangle
=\sum_{\ell,k=1}^dz[\ell]z[k]\left\{\frac{\probn(\mathbf{x}(\cdot)[\ell]\mathbf{x}(\cdot)[k])}{\sqrt[3]{\probn|\mathbf{x}(\cdot)[\ell]|^3}\cdot\sqrt[3]{\probn|\mathbf{x}(\cdot)[k]|^3}}\right\},
\end{eqnarray*}
where we have used that $\mathbf{D}_3^{-1}$ is diagonal with $i$-th entry $(\probn|\mathbf{x}(\cdot)[i]|^3)^{-\frac{1}{3}}$ and $\mathbf{\Sigma}[\ell,k]=\probn(\mathbf{x}(\cdot)[\ell]\mathbf{x}(\cdot)[k])$. We claim that the terms in curly brackets above are less than 1. To prove this, we just need to apply the Cauchy-Schwarz and Jensen inequalities:
$$
\probn(\mathbf{x}(\cdot)[\ell]\mathbf{x}(\cdot)[k])\le[\probn|\mathbf{x}(\cdot)[\ell]|^2]^{\frac{1}{2}}
[\probn|\mathbf{x}(\cdot)[k]|^2]^{\frac{1}{2}}
\le [\probn|\mathbf{x}(\cdot)[\ell]|^3]^{\frac{1}{3}}
[\probn|\mathbf{x}(\cdot)[k]|^3]^{\frac{1}{3}}.
$$
We deduce that
$$
\langle x,\mathbf{\Sigma}x\rangle\le\sum_{\ell,k=1}^d z[\ell]z[k]=\Vert z\Vert_1^2=\Vert\mathbf{D}_3x\Vert_1,
$$
as claimed. The proof of the second inequality stated in the lemma is analogous. 

For the last inequality stated in the lemma, we use a similar reasoning. We have
\begin{eqnarray*}
\langle\probn\epsilon(\cdot)\mathbf{x}(\cdot),x\rangle &=&
\langle\mathbf{D}_3^{-1}\probn\epsilon(\cdot)\mathbf{x}(\cdot),z\rangle\\
&\le &\sum_{\ell=1}^d|z[\ell]|\frac{|\probn(\epsilon(\cdot)\mathbf{x}(\cdot)[\ell])|}{[\probn|\mathbf{x}(\cdot)[\ell]|^3]^{\frac{1}{3}}}\\
(\mbox{by H\"older's inequality})\quad &\le &\sum_{\ell=1}^d|z[\ell]|[\probn|\epsilon(\cdot)[\ell]|^{\frac{2}{3}}]^{\frac{3}{2}}\\
(\mbox{by Jensen's inequality})\quad &\le &\Vert z\Vert_1[\probn|\epsilon(\cdot)|^6]^{\frac{1}{6}}=
\Vert \mathbf{D}_3x\Vert_1[\probn|\epsilon(\cdot)|^6]^{\frac{1}{6}}.
\end{eqnarray*}
We have thus finished the proof.\qed
\end{proof}

We now prepare the ground for applying Proposition \ref{lem:convexlocalized} for appropriate $\delta^\circ$, 
$\delta$, $\epsilon$. In our setting $\epsilon_0=0$ as we are interested in the exact Lasso solution. We set $\delta:=0$. We first find $y_*\in X$ such that
$
f_1(y_*)<-\alpha R.
$
We claim that it suffices to choose
\begin{equation}
y_*:=\frac{1-\alpha}{1+\alpha}x_*.\label{equation:y*}
\end{equation}
Indeed $y_*\in X$ and $\Vert\mathbf{D}_3y_*\Vert_1-(1+\alpha)R=\frac{1-\alpha}{1+\alpha}\Vert\mathbf{D}_3x_*\Vert_1-(1+\alpha)R\le(1-\alpha)R-(1+\alpha)R=-2\alpha R$. We now set 
$\delta^\circ:=-f_1(y_*)>0$ and $x_{-\delta^\circ}:=y_*$.

If \textsf{Norm} holds and $\alpha\le\frac{1}{2}$ (under the conditions of Theorem \ref{thm:lasso}), then we also have the following useful inequality
\begin{eqnarray}
\Vert\mathbf{D}_3(y_*-x_*)\Vert_1\bigvee\Vert\mathbf{\widehat D}_3(y_*-x_*)\Vert_1\le3\alpha R.
\label{equation:D3(y*-x*)}
\end{eqnarray}
Indeed, 
$
\Vert\mathbf{D}_3(y_*-x_*)\Vert_1=\frac{2\alpha}{1+\alpha}\Vert\mathbf{D}_3 x_*\Vert_1
\le 2\alpha R,
$ 
since $x_*\in X$ and 
$
\Vert\mathbf{\widehat D}_3(y_*-x_*)\Vert_1=\frac{2\alpha}{1+\alpha}\Vert\mathbf{\widehat D}_3 x_*\Vert_1 
\le 2\alpha(1+\alpha)R\le3\alpha R,
$
since \textsf{Norm} holds and $\alpha\le\frac{1}{2}$. 

From Lemmas \ref{lemma:KKT}-\ref{lemma:bounds:in:terms:D3x} and \eqref{equation:D3(y*-x*)}, we get
\begin{eqnarray}
f(y_*)-f(x_*)&\le &\Vert\mathbf{D}_3(y_*-x_*)\Vert_1^2+2\Vert\mathbf{D}_3(y_*-x_*)\Vert_1\sqrt[6]{\probn\epsilon(\cdot)^6}\nonumber\\
&\lesssim &\alpha^2R^2+\alpha R \Vert_1\sqrt[6]{\probn\epsilon(\cdot)^6}\nonumber\\
&\le &C_4\left(\frac{\ln(d/\delta)}{N}R^2+\sqrt{\frac{\ln(d/\delta)}{N}}R\sqrt[6]{\probn\epsilon(\cdot)^6}\right)\le\epsilon,\label{equation:f(y*)-f(x*)}
\end{eqnarray}
for some constant $C_4$ depending on $C_0$. In above, the last inequality is chosen by definition of $\epsilon$ so that $\epsilon\ge f(y_*)-f^*$ as required by Proposition \ref{lem:convexlocalized}.

We will now check conditions \eqref{eq:controlconstraintsinterior}-\eqref{eq:controlobjective} of Proposition \ref{lem:convexlocalized}. We start with \eqref{eq:controlconstraintsinterior}: since $\delta^\circ=-f_1(y_*)$ it is enough to obtain 
$\widehat F_1(y_*)\le-\alpha R=\hat\delta$. Indeed,
\begin{eqnarray*}
\widehat F_1(y_*)&=&\Vert\mathbf{\widehat D}_3y_*\Vert_1-(1+\alpha)R\\
&=&\frac{1-\alpha}{1+\alpha}\Vert\mathbf{\widehat D}_3x_*\Vert_1-(1+\alpha)R\\
(\mbox{$\mathsf{Norm}$ holds})\quad\quad &\le &(1-\alpha)(1+\alpha)R-(1+\alpha)R\\
&=&-\alpha(1+\alpha)R<-\alpha R.
\end{eqnarray*}

We now check \eqref{eq:controlconstraintsboundary}. As $\delta=0$ and $\hat\delta=-\alpha R$, it is enough to prove the stronger bound
$$
\sup_{x\in\re^d}\{-\widehat F_1(x):f_1(x)=0\}\le\alpha R.
$$ 
With our definition of $\widehat F_1$ and $f_1$, this is tantamount to proving that for all $x\in\re^d$ with $\Vert\mathbf{D}_3x\Vert_1=(1+\alpha)R$, we have $\Vert\mathbf{\widehat D}_3x\Vert_1\ge R$. Indeed, this follows from the fact that on the event $\mathsf{Diag}$,
$$
\forall x\in\re^d,\quad\Vert\mathbf{\widehat D}_3x\Vert_1\ge\frac{\Vert\mathbf{D}_3x\Vert_1}{(1+\alpha)}.
$$

To finish the proof, we must also check \eqref{eq:controlobjective}. Recall $\delta=0$. It is actually sufficient to give a stronger upper bound on the quantity
\begin{align}
I:=-\hat\Delta_0(x_*,y_*)+\sup\{-\hat\Delta_0(x,x_*):x\in X\cap\widehat X,f(x)-f^*=\epsilon\}.\label{eq:sup:final:condition}
\end{align}
We start by bounding $-\hat\Delta_0(x_*,y_*)$. We have	
\begin{eqnarray}
-\hat\Delta_0(x_*,y_*)&=&\widehat F(y_*)-\widehat F(x_*)-(f(y_*)-f(x_*))\nonumber\\
(\mbox{by Lemma \ref{lemma:KKT}})\quad &=&\langle(y_*-x_*),(\mathbf{\widehat\Sigma}-\mathbf{\Sigma})(y_*-x_*)\rangle-2\langle(\widehat\probn-\probn)\epsilon(\cdot)\mathbf{x}(\cdot),y_*-x_*\rangle\nonumber\\
(\mbox{by Lemma \ref{lemma:bounds:in:terms:D3x}})\quad &\le &\Vert\mathbf{\widehat D}_3(y_*-x_*)\Vert_1^2+\Vert\mathbf{D}_3(y_*-x_*)\Vert_1^2\nonumber\\
&&+2\Vert\mathbf{\widehat D}_3^{-1}(\widehat\probn-\probn)\epsilon(\cdot)\mathbf{x}(\cdot)\Vert_\infty\Vert\mathbf{\widehat D}_3(y_*-x_*)\Vert_1\nonumber\\
(\mbox{by \eqref{equation:D3(y*-x*)}})\quad &\lesssim &\alpha^2R^2+\alpha R\Vert\mathbf{\widehat D}_3^{-1}(\widehat\probn-\probn)\epsilon(\cdot)\mathbf{x}(\cdot)\Vert_\infty\nonumber\\
(\mbox{$\mathsf{Grad}$ holds})\quad &\lesssim &\alpha^2R^2+C_2\alpha R\sqrt{\frac{\ln(d/\delta)}{N}}\sqrt[6]{(\probn+\widehat\probn)\epsilon(\cdot)^6}\nonumber\\
&\le &C_5\left\{\frac{\ln(d/\delta)}{N}R^2+\frac{\ln(d/\delta)}{N}R\sqrt[6]{(\probn+\widehat\probn)\epsilon(\cdot)^6}\right\},\label{equation:hat:delta(y*,x_*)}
\end{eqnarray}
for some constant $C_5$ depending on $C_0$.

Now we bound the sup in \eqref{eq:sup:final:condition}. 
For all $x\in X\cap\widehat X$ with $f(x)-f(x_*)=\epsilon$,
\begin{eqnarray}
-\hat\Delta_0(x,x_*)&= & f(x)-f(x_*)-(\widehat F(x)-\widehat F(x_*))\nonumber\\
(\mbox{by Lemma \ref{lemma:KKT}})\quad &=&\langle(x-x_*),(\mathbf{\Sigma}-\mathbf{\widehat\Sigma})(x-x_*)\rangle-2\langle(\probn-\widehat\probn)\epsilon(\cdot)\mathbf{x}(\cdot),x-x_*\rangle\nonumber\\
&\le &\sup\left\{\langle h,(\mathbf{\Sigma}-\mathbf{\widehat\Sigma})h\rangle:\langle h,\mathbf{\Sigma}h\rangle\le \epsilon,\quad\Vert\mathbf{\widehat D}_3 h\Vert_1\le 5R\right\}\nonumber\\
&+&2\sup\left\{\langle h,(\probn-\widehat\probn)\epsilon(\cdot)\mathbf{x}(\cdot)\rangle:\Vert\mathbf{\widehat D}_3h\Vert_1\le 5R\right\}=:I_1+I_2,\nonumber\\
\label{equation:hat:Delta(x,x*)}
\end{eqnarray}
where in last inequality we used $\langle x-x_*,\mathbf{\Sigma}(x-x_*)\rangle\le f(x)-f(x_*)$ by Lemma \ref{lemma:KKT} and the fact that $\Vert\mathbf{\widehat D}_3(x-x_*)\Vert_1\le \Vert\mathbf{\widehat D}_3x\Vert_1+\Vert\mathbf{\widehat D}_3x_*\Vert_1\le R+(1+\alpha)^2R\le 5R$ since $x\in\widehat X$, the event $\mathsf{Norm}$ holds and $0\le\alpha\le1$. 

The first term $I_1$ in \eqref{equation:hat:Delta(x,x*)} may be bounded as
\begin{eqnarray}
(\mbox{by $\mathsf{Quad}$})\quad I_1 &\le & \sup\left\{(1-\phi)\langle h,\mathbf{\Sigma}h\rangle+C_3\frac{\ln(d/\delta)}{N}\Vert\mathbf{\widehat D}_3h\Vert_1:\langle h,\mathbf{\Sigma}h\rangle\le \epsilon,\quad\Vert\mathbf{\widehat D}_3 h\Vert_1\le 5R\right\}\nonumber\\
&\le &(1-\phi)\epsilon+5C_3R\frac{\ln(d/\delta)}{N}.\label{equation:hat:Delta(x,x*):I1}
\end{eqnarray}
The second term $I_2$ in \eqref{equation:hat:Delta(x,x*)} may be bounded as 
\begin{eqnarray}
I_2&\le &2\sup\left\{\Vert\mathbf{\widehat D}_3h\Vert_1\cdot\Vert\mathbf{\widehat D}_3^{-1}(\probn-\widehat\probn)\epsilon(\cdot)\mathbf{x}(\cdot)\Vert_\infty:\Vert\mathbf{\widehat D}_3 h\Vert_1\le 5R\right\}\nonumber\\
(\mbox{by $\mathsf{Grad}$})\quad &\le &10C_2R\sqrt{\frac{\ln(d/\delta)}{N}}\sqrt[6]{(\probn+\widehat\probn)\epsilon(\cdot)^6}.
\label{equation:hat:Delta(x,x*):I2}
\end{eqnarray}
Hence, from \eqref{equation:hat:Delta(x,x*)}-\eqref{equation:hat:Delta(x,x*):I2} we finally get that the sup in \eqref{eq:sup:final:condition} is upper bounded by
\begin{eqnarray}
(1-\phi)\epsilon+5C_3R\frac{\ln(d/\delta)}{N}+10C_2R\sqrt{\frac{\ln(d/\delta)}{N}}\sqrt[6]{(\probn+\widehat\probn)\epsilon(\cdot)^6}.\label{equation:hat:Delta(x,x*):final}
\end{eqnarray}

To finalize the proof of \eqref{eq:controlobjective}, we just need to show
\begin{align}
I+f(y_*)-f^*<\epsilon.\label{eq:controlobjective:aux}
\end{align}
Let $I_3:=(R\vee R^2)\frac{\ln(d/\delta)}{N}+R\sqrt{\frac{\ln(d/\delta)}{N}}\sqrt[6]{(\probn+\widehat\probn)\epsilon(\cdot)^6}$. We observe from \eqref{equation:f(y*)-f(x*)} (which gives an lower bound for $\epsilon$), \eqref{equation:hat:delta(y*,x_*)} and \eqref{equation:hat:Delta(x,x*):final}, that we may choose 
$\epsilon=\mathcal{O}(I_3)$ in order to \eqref{eq:controlobjective:aux} to hold. Condition \eqref{eq:controlobjective} is checked. 

From Proposition \ref{lem:convexlocalized}, we conclude that 
$\widehat{X}^*\subset X^{*,\epsilon}$. This means that $\widehat x_{\emph{\tiny{lasso}}}$ satisfies conditions \eqref{thm:lasso:feasibility}-\eqref{thm:lasso:optimality} of the \textbf{Claim} under the conditions of $\alpha$, $N$, $R$ and $\delta$ of Theorem \ref{thm:lasso}. The \textbf{Claim} is proved.

\subsubsection{Proofs of probabilistic lemmas}\label{subsection:proof:prob:lemmas}

As mentioned in the previous section, we need to prove Lemmas \ref{lemma:norm}-\ref{lemma:quad} in order to complete the proof of Theorem \ref{thm:lasso}. This is carried out in this section.
\begin{proof}[of Lemma \ref{lemma:norm}]
Without loss on generality, we may assume $\mathbf{D}_3x_*\neq0$. Since $\Vert\mathbf{D}_3x_*\Vert_1\le(1+\alpha)R$, it is sufficient to prove
\begin{equation}\label{equation:lemma:norm:eq1}
\prob\left\{\frac{\Vert\mathbf{\widehat D}_3x_*\Vert_1}{\Vert\mathbf{D}_3x_*\Vert_1}\ge1+\alpha\right\}\le\prob\left\{\left(\frac{\Vert\mathbf{\widehat D}_3x_*\Vert_1}{\Vert\mathbf{D}_3x_*\Vert_1}\right)^3\ge1+\alpha\right\}\le\frac{\delta}{4},
\end{equation}
the first inequality above being trivial. 

Set $z:=\frac{\mathbf{D}_3x_*}{\Vert\mathbf{D}_3x_*\Vert_1}$ so that $\Vert z\Vert_1=1$. By the definition of $z$, convexity and \eqref{def:pen:diag:matrix:empirical}, 
\begin{eqnarray}
\left(\frac{\Vert\mathbf{\widehat D}_3x_*\Vert_1}{\Vert\mathbf{D}_3x_*\Vert_1}\right)^3-1&=&
\left(\sum_{\ell=1}^d\frac{\sqrt[3]{\widehat\probn|\mathbf{x}(\cdot)[\ell]|^3}\cdot |x_*[\ell]|}{\Vert\mathbf{D}_3x_*\Vert_1}\right)^3-1\nonumber\\
&=&\left(\sum_{\ell=1}^d\frac{|z[\ell]|\sqrt[3]{\widehat\probn|\mathbf{x}(\cdot)[\ell]|^3}}{\sqrt[3]{\probn|\mathbf{x}(\cdot)[\ell]|^3}}\right)^3-1\nonumber\\
&\le &\left(\sum_{\ell=1}^d\frac{|z[\ell]|\widehat\probn|\mathbf{x}(\cdot)[\ell]|^3}{\probn|\mathbf{x}(\cdot)[\ell]|^3}\right)-1\nonumber\\
&=&\sum_{\ell=1}^d|z[\ell]|\frac{\widehat\probn|\mathbf{x}(\cdot)[\ell]|^3-\probn|\mathbf{x}(\cdot)[\ell]|^3}{\probn|\mathbf{x}(\cdot)[\ell]|^3}=\widehat\probn g(\cdot),\label{equation:lemma:norm:eq2}
\end{eqnarray}
where we have defined the function
$$
\xi\mapsto g(\xi):=\sum_{\ell=1}^d|z[\ell]|\frac{|\mathbf{x}(\xi)[\ell]|^3-\probn|\mathbf{x}(\cdot)[\ell]|^3}{\probn|\mathbf{x}(\cdot)[\ell]|^3}.
$$
Note that $\esp[g(\xi)]=0$. 

We next estimate the variances $\probn g(\cdot)^2$ and $\widehat\probn g(\cdot)^2$. We have
\begin{eqnarray}
\probn g(\cdot)^2&=&\esp\left[\left(\sum_{\ell=1}^d|z[\ell]|\frac{|\mathbf{x}(\xi)[\ell]|^3-\probn|\mathbf{x}(\cdot)[\ell]|^3}{\probn|\mathbf{x}(\cdot)[\ell]|^3}\right)^2\right]\nonumber\\
(\mbox{by convexity})\quad\quad &\le &\sum_{\ell=1}^d|z[\ell]|\esp\left[\left(\frac{|\mathbf{x}(\xi)[\ell]|^3-\probn|\mathbf{x}(\cdot)[\ell]|^3}{\probn|\mathbf{x}(\cdot)[\ell]|^3}\right)^2\right]\nonumber\\
&\le &\Vert z\Vert_1\max_{\ell\in[d]}\frac{\probn|\mathbf{x}(\cdot)[\ell]|^6}{(\probn|\mathbf{x}(\cdot)[\ell]|^3)^2}\le c_0,\label{equation:lemma:norm:eq3}
\end{eqnarray}
for some constant $c_0$ depending only on $C$, where we have used that $\sqrt[6]{\probn|\mathbf{x}(\cdot)[\ell]|^6}\le\sqrt[q]{\probn|\mathbf{x}(\cdot)[\ell]|^q}$ and \eqref{equation:thm:lasso:moment:condition}. 

Following the same guideline above, we get 
\begin{eqnarray}
\widehat\probn g(\cdot)^2&\le &\max_{\ell\in[d]}\frac{\widehat\probn(|\mathbf{x}(\cdot)[\ell]|^3-\probn|\mathbf{x}(\cdot)[\ell]|^3)^2}{(\probn|\mathbf{x}(\cdot)[\ell]|^3)^2}\nonumber\\
&=&\max_{\ell\in[d]}\frac{\widehat\probn|\mathbf{x}(\cdot)[\ell]|^6+(\probn|\mathbf{x}(\cdot)[\ell]|^3)^2-2\widehat\probn|\mathbf{x}(\cdot)[\ell]|^3\cdot\probn|\mathbf{x}(\cdot)[\ell]|^3}{(\probn|\mathbf{x}(\cdot)[\ell]|^3)^2}\nonumber\\
&\le &\max_{\ell\in[d]}\frac{\widehat\probn|\mathbf{x}(\cdot)[\ell]|^6}{(\probn|\mathbf{x}(\cdot)[\ell]|^3)^2}+1.\label{equation:lemma:norm:eq4}
\end{eqnarray}
From Markov's inequality for the random variable $Z:=|\mathbf{x}(\cdot)[\ell]|^6$ with $q$ replaced by $\mathfrak{p}:=q/6$, we obtain that
\begin{eqnarray}
\prob\left\{\widehat\probn Z\le 
\probn Z+c_{\mathfrak{p}}\frac{\Lpcnorm{Z-\probn Z}}{N^{1-\frac{1}{\mathfrak{p}}}}(8/\delta)^{\frac{1}{\mathfrak{p}}}
\right\}\ge1-\frac{\delta}{8},\label{equation:lemma:norm:eq5}
\end{eqnarray}
where $c_{\mathfrak{p}}$ is a constant that only depends on $\mathfrak{p}$. From \eqref{equation:lemma:norm:eq4}-\eqref{equation:lemma:norm:eq5} and $N\ge(\delta^{-1})^{1/(\mathfrak{p}-1)}$, we obtain that, with probability $\ge1-\frac{\delta}{8}$, 
\begin{eqnarray}
\widehat\probn g(\cdot)^2&\le &
\max_{\ell\in[d]}\left\{\frac{(1+8^{1/\mathfrak{p}}c_{\mathfrak{p}})\probn|\mathbf{x}(\cdot)[\ell]|^6}{(\probn|\mathbf{x}(\cdot)[\ell]|^3)^2}
+8^{1/\mathfrak{p}}c_{\mathfrak{p}}\frac{\left(\probn|\mathbf{x}(\cdot)[\ell]|^q\right)^{\frac{6}{q}}}{(\probn|\mathbf{x}(\cdot)[\ell]|^3)^2}\right\}+1\nonumber\\
&\le &C_{\mathfrak{p}},\label{equation:lemma:norm:eq6}
\end{eqnarray}
for some constant $C_{\mathfrak{p}}$ depending only on $C$ and $\mathfrak{p}=q/6$, where we have used that $\sqrt[6]{\probn|\mathbf{x}(\cdot)[\ell]|^6}\le\sqrt[q]{\probn|\mathbf{x}(\cdot)[\ell]|^q}$ and \eqref{equation:thm:lasso:moment:condition}.

First, from \eqref{equation:lemma:norm:eq3} and \eqref{equation:lemma:norm:eq6}, we obtain that, with probability $\ge1-\delta/8$, the bound $\hat\sigma^2:=(\widehat\probn+\probn)g(\cdot)^2\le C_{\mathfrak{p}}+c_0$ holds.  Secondly, the upper tail in Lemma \ref{lemma:concentration:ineq:self:norm} in the Appendix with $t:=\ln(16/\delta)<\ln(\mathcal{O}(1)d/\delta)$, implies that, with probability $\ge1-\delta/8$, $\widehat\probn g(\cdot)\le \sqrt{\frac{2(1+\ln(16/\delta))}{N}\hat\sigma^2}$. From the two previous facts, the union bound and proper definitions of $C_0$ and $\alpha$ stated in Theorem \ref{thm:lasso} in terms of $c_0$ and 
$C_{\mathfrak{p}}$ above, we get that
\begin{eqnarray}
\prob\left\{\widehat\probn g(\cdot)\ge\alpha\right\}\le\frac{\delta}{4}.\label{equation:lemma:norm:eq7}
\end{eqnarray}
Relations \eqref{equation:lemma:norm:eq2} and \eqref{equation:lemma:norm:eq7} prove the second inequality in \eqref{equation:lemma:norm:eq1} as desired.\qed
\end{proof}

\begin{proof}[of Lemma \ref{lemma:diag}]
For each $\ell\in[d]$, we will apply Lemma \ref{lemma:sub-gaussian:lower:tail} in the Appendix with $Z_1:=|\mathbf{x}(\xi_1)[\ell]|^3$ and $a:=2$. In our case, from \eqref{equation:thm:lasso:moment:condition} we have $\probn|\mathbf{x}(\cdot)[\ell]|^6\le\{\probn|\mathbf{x}(\cdot)[\ell]|^7\}^{\frac{6}{7}}\le 
C^6\{\probn|\mathbf{x}(\cdot)[\ell]|^3\}^{\frac{6}{3}}$, that is, $\frac{\esp[Z_1^2]}{(\esp[Z_1])^2}\le C^6$. From this fact, Lemma \ref{lemma:sub-gaussian:lower:tail} and the union bound over $\ell\in[d]$ we obtain
\begin{eqnarray*}
\prob\left\{\bigcap_{\ell\in[d]}\left(\widehat\probn|\mathbf{x}(\cdot)[\ell]|^3\ge(1-\epsilon)\probn|\mathbf{x}(\cdot)[\ell]|^3\right)\right\}\ge1-d \exp\left\{-\frac{\sqrt{\epsilon}}{2C^6}N\right\}.
\end{eqnarray*}

From the above relation, we observe that, in order to obtain the desired claim $\prob(\mathsf{Diag})\ge1-\frac{\delta}{4}$ with $\alpha$ defined in Theorem \ref{thm:lasso}, it is enough find a (deterministic) $\epsilon>0$ such that $\sqrt{\epsilon}\ge\frac{2C^6}{N}\ln(4d/\delta)$ and $1-\epsilon\ge\frac{1}{(1+\alpha)^3}\Leftrightarrow\epsilon\le\frac{\alpha^3+\alpha^2+\alpha}{(1+\alpha)^3}$. By increasing $C_0$ (as a function of $C$) in the definition of $\alpha$ in Theorem \ref{thm:lasso}, such property is satisfied if we can find a $\epsilon$ such that $\alpha^4<\epsilon\le\frac{\alpha^3+\alpha^2+\alpha}{(1+\alpha)^3}$. Since $0<\alpha\le\frac{1}{2}$ for large enough $N$, we obtain $\alpha^4(1+\alpha)^3<\alpha^3+\alpha^2+\alpha$ and, hence, such $\epsilon>0$ exists. The claim is proved.\qed 
\end{proof}

\begin{proof}[of Lemma \ref{lemma:grad}]
As a first step, for every $\ell\in[d]$, we will apply Lemma \ref{lemma:concentration:ineq:self:norm} in the Appendix to the random variable $g(\xi):=\epsilon(\xi)\mathbf{x}(\xi)[\ell]$ with $t:=\ln(\frac{32d}{\delta})$ (considering the upper and lower tails inequalities). From the obtained tail inequalities, the union bound and the fact that, for all $\ell\in[d]$,
\begin{eqnarray*}
(\widehat\probn+\probn)\left\{\epsilon(\cdot)\mathbf{x}(\cdot)[\ell]-\probn\epsilon(\cdot)\mathbf{x}(\cdot)[\ell]\right\}^2&\le& 2(\widehat\probn+\probn)\left\{\epsilon(\cdot)^2\mathbf{x}(\cdot)[\ell]^2+\probn\epsilon(\cdot)^2\mathbf{x}(\cdot)[\ell]^2\right\}\\
&=&2\widehat\probn\epsilon(\cdot)^2\mathbf{x}(\cdot)[\ell]^2+3\probn\epsilon(\cdot)^2\mathbf{x}(\cdot)[\ell]^2\\
&\le &3(\widehat\probn+\probn)\left\{\epsilon(\cdot)^2\mathbf{x}(\cdot)[\ell]^2\right\},
\end{eqnarray*}
we conclude that, with probability $\ge1-\frac{\delta}{8}$, the following holds: for all $\ell\in[d]$,
\begin{eqnarray}
|(\widehat\probn-\probn)\epsilon(\cdot)\mathbf{x}(\cdot)[\ell]|\le\sqrt{\frac{6(1+\ln(32d/\delta))}{N}(\widehat\probn+\probn)\left\{\epsilon(\cdot)^2\mathbf{x}(\cdot)[\ell]^2\right\}}.\label{lemma:grad:eq1}
\end{eqnarray}
We also note that H\"older's inequality guarantees that 
\begin{eqnarray}
\sqrt{(\widehat\probn+\probn)\left\{\epsilon(\cdot)^2\mathbf{x}(\cdot)[\ell]^2\right\}}
\le[(\widehat\probn+\probn)\epsilon(\cdot)^6]^{\frac{1}{6}}[(\widehat\probn+\probn)\mathbf{x}(\cdot)[\ell]^3]^{\frac{1}{3}}.\label{lemma:grad:eq2}
\end{eqnarray}

For each $\ell\in[d]$, we will now apply Lemma \ref{lemma:sub-gaussian:lower:tail} in the Appendix with $Z_1:=|\mathbf{x}(\xi_1)[\ell]|^3$, $a:=2$ and $\epsilon:=\frac{1}{2}$. As observed in the proof of Lemma \ref{lemma:diag}, we obtain $\frac{\esp[Z_1^2]}{(\esp[Z_1])^2}\le C^6$ from the condition \eqref{equation:thm:lasso:moment:condition}. From this fact, Lemma \ref{lemma:sub-gaussian:lower:tail} and the union bound over $\ell\in[d]$ we obtain that, with probability $\ge1-d\exp\{-\frac{N}{2\sqrt{2}C^6}\}$, the following holds: 
\begin{equation}\label{lemma:grad:eq3}
\forall\ell\in[d],\quad\quad\probn|\mathbf{x}(\cdot)[\ell]|^3\le2\widehat\probn|\mathbf{x}(\cdot)[\ell]|^3.
\end{equation}
Hence, by enlarging the constant $C_0$ as a function of $C$ in the definition of $\alpha$ in Theorem \ref{thm:lasso}, we obtain that \eqref{lemma:grad:eq3} holds with probability $\ge1-\frac{\delta}{8}$.

Combining \eqref{lemma:grad:eq1}-\eqref{lemma:grad:eq3} with an union bound, we deduce that, with probability $\ge\frac{\delta}{4}$, the following holds: 
\begin{eqnarray*}
\forall\ell\in[d],\quad\quad|(\widehat\probn-\probn)\epsilon(\cdot)\mathbf{x}(\cdot)[\ell]|\le C_2\sqrt{\frac{\ln(d/\delta)}{N}}
[(\widehat\probn+\probn)\epsilon(\cdot)^6]^{\frac{1}{6}}[\widehat\probn\mathbf{x}(\cdot)[\ell]^3]^{\frac{1}{3}},
\end{eqnarray*}
for some constant $C_2>0$ depending just on $C$. The above relation is exactly the property defining the event $\mathsf{Grad}$ since the $\ell$-th diagonal entry of $\mathbf{\widehat D}_3$ is $[\widehat\probn\mathbf{x}(\cdot)[\ell]^3]^{\frac{1}{3}}$. We have thus proved that $\prob(\mathsf{Grad})\ge1-\frac{\delta}{4}$.\qed
\end{proof}

\begin{proof}[of Lemma \ref{lemma:quad}]
Recall that $\{\xi_j\}_{j=1}^N$ is an i.i.d. sample from $\probn$ so that $\{\mathbf{x}(\xi_j)\}_{j\in[N]}$ are i.i.d. random vectors. Using definitions \eqref{def:population:matrix}-\eqref{def:design:matrix}, we have, for any $h\in\re^d$, 
\begin{eqnarray*}
\langle h,\mathbf{\Sigma}h\rangle=\probn\langle\mathbf{x}(\cdot),h\rangle^2,\quad\quad\quad
\langle h,\mathbf{\widehat\Sigma}h\rangle=\probn\langle\mathbf{x}(\cdot),h\rangle^2=
\frac{1}{N}\sum_{j=1}^N\langle\mathbf{x}(\xi_j),h\rangle^2.
\end{eqnarray*}
Related to the above expressions, we will now invoke the ``small-ball condition'' \eqref{equation:thm:lasso:small:ball:prob} and a related result of Lecu\'e and Mendelson stated in Corollary 2.5, item (2) of \cite{lecue:mendelson2017}. Such corollary implies that, under \eqref{equation:thm:lasso:small:ball:prob}, there exist universal constants $c_1, c_2>0$  such that, if $s\in\mathbb{N}\setminus\{0\}$, $N\ge c_1\frac{s\ln(ed/s)}{p^2}$ and $\phi:=1\wedge(u^2p/2)$, then\footnote{Actually, the Corollary 2.5 in \cite{lecue:mendelson2017} cover only the case when $\mathbf{\Sigma}$ is the identity matrix, but the arguments are based on VC dimension theory that are readily extendable to our setting. We omit such details.} 
\begin{eqnarray}
\prob\left\{\forall h\in\re^d, \quad\Vert h\Vert_0\le s\Rightarrow\langle h,\mathbf{\widehat\Sigma}h\rangle\ge\phi\langle h,\mathbf{\Sigma}h\rangle\right\}\ge1-e^{-c_2p^2N}.\label{lemma:quad:eq2}
\end{eqnarray}

Secondly, we will now invoke a result of Oliveira stated in Lemma 5.1 of \cite{oliveira2013}.\footnote{See also Corollary 2.5, item (2) of \cite{lecue:mendelson2017} for essentially the same statement. See also  \cite{oliveira2016}.} That lemma shows that the property
\begin{eqnarray}
\forall h\in\re^d, \quad\Vert h\Vert_0\le s\Rightarrow\langle h,\mathbf{\widehat\Sigma}h\rangle\ge\phi\langle h,\mathbf{\Sigma}h\rangle,\label{lemma:quad:eq3}
\end{eqnarray}
stated in the event in \eqref{lemma:quad:eq2} and \emph{restricted to $s$-sparse vectors}, actually implies the property
$$
\forall h\in\re^d, \quad\langle h,\mathbf{\widehat\Sigma}h\rangle\ge\phi\langle h,\mathbf{\Sigma}h\rangle-\frac{\Vert\mathbf{D}^{1/2}h\Vert_1}{s-1},
$$
\emph{extended to all vectors in $\re^d$} at a cost of an extra $\ell_1$ error in the lower bound, where $\mathbf{D}$ is the diagonal matrix with entries $\mathbf{D}[\ell,\ell]:=(\mathbf{\widehat\Sigma}[\ell,\ell]-\phi\mathbf{\Sigma}[\ell,\ell])_+$ and $\mathbf{D}^{1/2}$ is the square-root of $\mathbf{D}$. Since, 
\begin{eqnarray*}
\forall\ell\in\re^d,\quad\quad\mathbf{\widehat\Sigma}[\ell,\ell]-\phi\mathbf{\Sigma}[\ell,\ell]\le\mathbf{\widehat\Sigma}[\ell,\ell]=\mathbf{\widehat D}_2[\ell,\ell]^2\le \mathbf{\widehat D}_3[\ell,\ell]^2,
\end{eqnarray*}
we obtain from the two previous inequalities that 
\begin{equation}
\forall h\in\re^d, \quad\langle h,\mathbf{\widehat\Sigma}h\rangle\ge\phi\langle h,\mathbf{\Sigma}h\rangle-\frac{\Vert\mathbf{\widehat D}_3h\Vert_1}{s-1}.\label{lemma:quad:eq4}
\end{equation}

From \eqref{lemma:quad:eq2} and the fact that \eqref{lemma:quad:eq3}$\Rightarrow$\eqref{lemma:quad:eq4}, we obtain 
\begin{eqnarray*}
\prob\left\{\forall h\in\re^d,\quad\langle h,\mathbf{\widehat\Sigma}h\rangle\ge\phi\langle h,\mathbf{\Sigma}h\rangle-\frac{\Vert\mathbf{\widehat D}_3h\Vert_1}{s-1}\right\}\ge1-e^{-c_2p^2N}.
\end{eqnarray*}
Using the above relation with $s:=1+\frac{N}{C_3\ln(d/\delta)}$ for a sufficiently large constant $C_3>0$ and by enlarging $C_0$ in the statement of Theorem \ref{thm:lasso} as functions of $C$, $u$ and $p$, we obtain that $\prob(\mathsf{Quad})\ge1-\frac{\delta}{4}$ as desired.\qed
\end{proof}

\section*{Appendix}

\begin{proof}[of Proposition \ref{prop:sizeproduct}]Given admissible sequences $\{\mathcal{A}_{1,j}\}_{j\geq 0}$ and $\{\mathcal{A}_{2,j}\}_{j\geq 0}$ for $\mathcal{M}_1$ and $\mathcal{M}_2$, one may define an admissible sequence $\{\mathcal{C}_j\}_{j\geq 0}$ for $\mathcal{M}$ via:
\[\mathcal{C}_0:=\{\mathcal{M}\}\mbox{ and }\mathcal{C}_j:= \mathcal{A}_{1,j-1}\times \mathcal{A}_{2,j-1}\,(j\geq 1).\]
It is easy to see that this is indeed admissible and moreover
\[\diam(\mathcal{C}_0) = \diam(\mathcal{M})= \diam(\mathcal{M}_1) + \diam(\mathcal{M}_2),\]
\[\diam(\mathcal{C}_j)\leq \diam(\mathcal{A}_{1,j-1}) + \diam(\mathcal{A}_{2,j-1}).\]
Therefore,
\[\gamma_2^{(\alpha)}(\mathcal{M})\leq  \diam(\mathcal{M})^\alpha + \sum_{j\geq 1}2^{j/2}(\diam(\mathcal{A}_{1,j-1})^\alpha + \diam(\mathcal{A}_{2,j-1})^\alpha)\]
or equivalently
\begin{eqnarray*}\gamma_2^{(\alpha)}(\mathcal{M})&\leq &  \diam(\mathcal{M}_1)^\alpha + \diam(\mathcal{M}_2)^\alpha \\ & & + \sqrt{2}\left(\sum_{j\geq 0}2^{j/2}\diam(\mathcal{A}_{1,j-1})^\alpha\right) \\ & & +\sqrt{2}\left(\sum_{j\geq 0}2^{j/2}\diam(\mathcal{A}_{2,j-1})^\alpha\right).\end{eqnarray*}
The proof finishes when we note that $\diam(\mathcal{M})^{\alpha}\leq \gamma^{(\alpha)}_2(\mathcal{M})$ and take the infimum over admissible sequences. 
\qed
\end{proof}

We recall the following fundamental result due to Panchenko. It establishes a sub-Gaussian tail for the deviation of an heavy-tailed empirical process around its mean after a proper \emph{self-normalization} by a random quantity $\widehat V$.
\begin{theorem}[Panchenko's inequality \cite{panchenko2003}]\label{thm:panchenko}
Let $\alg$ be a finite family of measurable functions $g:\Xi\rightarrow\re$ such that $\probn g^2(\cdot)<\infty$. Let also $\{\xi_j\}_{j=1}^N$ and $\{\eta_j\}_{j=1}^N$ be both i.i.d. samples drawn from a distribution $\probn$ over $\Xi$ which are independent of each other. Define
$$
\mathsf{S}:=\sup_{g\in\alg}\sum_{j=1}^Ng(\xi_j),\quad\quad\mbox{and}\quad\quad\widehat{V}:=\esp\left\{\sup_{g\in\alg}\sum_{j=1}^N\left[g(\xi_j)-g(\eta_j)\right]^2\Bigg|\sigma(\xi_1,\ldots,\xi_N)\right\}.
$$
Then, for all $t>0$,
$$
\prob\left\{\mathsf{S}-\esp[\mathsf{S}]\ge \sqrt{\frac{2(1+t)}{N}\widehat{V}}\right\}\bigvee\prob\left\{\mathsf{S}-\esp[\mathsf{S}]\le -\sqrt{\frac{2(1+t)}{N}\widehat{V}}\right\}\le 2e^{-t}.
$$
\end{theorem}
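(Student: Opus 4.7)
This is the main concentration inequality from \cite{panchenko2003}, and I would reproduce its symmetrization-plus-moment-comparison proof.

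First, I would introduce an independent i.i.d.\ ``ghost'' sample $\{\eta_j\}_{j=1}^N$ of $\probn$ and set $\mathsf{S}' := \sup_{g\in\mathcal{F}}\sum_{j=1}^N g(\eta_j)$. For any convex nondecreasing $\Phi$, Jensen's inequality applied conditionally on $\{\xi_j\}$ gives $\esp\Phi(\mathsf{S}-\esp\mathsf{S}) = \esp\Phi(\mathsf{S}-\esp\mathsf{S}') \le \esp\Phi(\mathsf{S}-\mathsf{S}')$. Bounding $\mathsf{S}-\mathsf{S}' \le \sup_{g\in\mathcal{F}} \sum_j (g(\xi_j)-g(\eta_j))$ and exploiting the symmetry in law of each pair $(\xi_j,\eta_j)$, I insert i.i.d.\ Rademacher signs $\{\varepsilon_j\}_{j=1}^N$ independent of $(\xi,\eta)$ without changing the distribution, so the problem reduces to controlling the Rademacher-symmetrized supremum $\sup_g \sum_j \varepsilon_j(g(\xi_j)-g(\eta_j))$.

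Second, I would derive an even-moment bound. Conditioning on $(\xi,\eta)$, Hoeffding's lemma applied term by term shows that for each fixed $g$ the Rademacher-weighted sum $Z_g := \sum_j \varepsilon_j(g(\xi_j)-g(\eta_j))$ is sub-Gaussian with proxy $V_g := \sum_j(g(\xi_j)-g(\eta_j))^2$, so $\esp_\varepsilon Z_g^{2k} \le \tfrac{(2k)!}{2^k k!} V_g^k$. The key technical step of \cite{panchenko2003} is a Taylor-coefficient comparison (a ``soft'' union bound) that converts these pointwise estimates into the global moment inequality
$$
\esp\bigl[(\mathsf{S}-\esp\mathsf{S})_+^{2k}\bigr] \le \frac{(2k)!}{2^k k!}\,\esp\bigl[(\widehat V/N)^k\bigr], \qquad k\ge 1,
$$
where $\widehat V = \esp[\sup_g V_g \mid \sigma(\xi_1,\ldots,\xi_N)]$ arises after integrating out $\eta$, and the $N^{-k}$ scaling tracks the passage from $\mathsf{S}-\esp\mathsf{S}$ to a per-sample average.

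Third, I would invoke Panchenko's tail-comparison lemma: whenever $X$ and $Y\ge 0$ satisfy $\esp[X_+^{2k}]\le \tfrac{(2k)!}{2^k k!}\esp[Y^k]$ for every integer $k\ge 1$, one has $\prob(X\ge \sqrt{2(1+t)Y})\le 2e^{-t}$ for all $t>0$; the proof is an elementary Markov inequality optimized over~$k$. Applying this with $X=\mathsf{S}-\esp\mathsf{S}$ and $Y=\widehat V/N$ yields the upper tail. The lower tail follows by applying the same chain of arguments to the class $-\mathcal{F}:=\{-g:g\in\mathcal{F}\}$, which produces the same squared-increment process and hence the same $\widehat V$.

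The main obstacle is step two: passing from the fiberwise sub-Gaussian estimates to a variance proxy that is $\widehat V/N$ rather than something inflated by $\log|\mathcal{F}|$. A naive union bound at the exponential scale would destroy this, so the crux of Panchenko's argument is the moment-by-moment comparison that aligns the $(2k)!/(2^k k!)$ combinatorial constant with the $k$-th moment of $\widehat V$ itself; once that inequality is established, the tail-comparison lemma is routine.
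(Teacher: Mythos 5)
First, be aware that the paper contains no proof of this statement: it is quoted from Panchenko's 2003 paper and used as a black box in the Appendix, so there is no internal argument to compare yours against. Your outline does have the right skeleton for Panchenko's proof --- ghost sample, Jensen's inequality $\esp\Phi(\mathsf{S}-\esp\mathsf{S})\le\esp\Phi(\mathsf{S}-\mathsf{S}')$ for convex nondecreasing $\Phi$, Rademacher symmetrization, Khintchine-type conditional moment bounds --- and you correctly locate the difficulty in avoiding a $\ln|\alg|$ penalty.

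The genuine gap is the step you call routine. The tail-comparison lemma you invoke, namely that $\esp[X_+^{2k}]\le\frac{(2k)!}{2^k k!}\esp[Y^k]$ for all integers $k\ge1$ implies $\prob\{X\ge\sqrt{2(1+t)Y}\}\le 2e^{-t}$, is false for a general dependent pair $(X,Y)$, and Markov's inequality optimized over $k$ does not prove it: Markov would require a bound on $\esp[X_+^{2k}Y^{-k}]$, which does not follow from the two marginal moment conditions. For a counterexample take $X=M/\sqrt{2}$ deterministic and $Y\in\{\epsilon^2,M^2\}$ with probability $1/2$ each; the moment hypothesis holds for every $k\ge1$, yet $\prob\{X\ge\sqrt{2(1+t)Y}\}\ge 1/2$ for every $t$ once $\epsilon$ is small enough. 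The entire content of the self-normalized inequality is that $\widehat V$ is coupled to $\mathsf{S}$ through the same sample; the argument in \cite{panchenko2003} keeps track of this pair jointly, via a comparison in the increasing convex order produced by the symmetrization, rather than passing through two separate marginal moment bounds, and this coupling is exactly what your reduction discards. Two smaller problems: your intermediate bound $\esp[(\mathsf{S}-\esp\mathsf{S})_+^{2k}]\le\frac{(2k)!}{2^kk!}\esp[(\widehat V/N)^k]$ cannot hold with the unnormalized $\mathsf{S}$ and $\widehat V$ of the statement (the left side scales like $N^k$, the right side like a constant; note that the theorem as transcribed carries the same spurious $1/N$, which is inconsistent with the self-normalized Lemma the paper derives from it for a unitary class), and Jensen's inequality gives $\esp[\widehat V^k]\le\esp[(\sup_g V_g)^k]$, i.e.\ the wrong direction for your ``integrating out $\eta$'' step. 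Finally, applying the upper-tail argument to $-\alg$ controls lower deviations of $\inf_g\sum_j g(\xi_j)$, not of $\mathsf{S}$; the lower tail should instead be obtained by symmetrizing $\esp\mathsf{S}-\mathsf{S}\le\mathsf{S}'-\mathsf{S}$ directly.
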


The following result is a direct consequence of Theorem \ref{thm:panchenko} applied to the unitary class $\alg:=\{g\}$. It provides a sub-Gaussian tail for any random variable with finite $2$nd moment in terms its variance and empirical variance.
\begin{lemma}[Sub-Gaussian tail for self-normalized sums]\label{lemma:concentration:ineq:self:norm}
Suppose $\{\xi_j\}_{j=1}^N$ is i.i.d. sample of a distribution $\probn$ over $\Xi$ and denote by $\widehat\probn$ the correspondent empirical distribution. Then for any  measurable function $g:\Xi\rightarrow\re$ satisfying $\probn g(\cdot)^2<\infty$ and, for any $t>0$,
\begin{eqnarray*}
&&\prob\left\{(\widehat\probn-\probn)g(\cdot)\ge\sqrt{\frac{2(1+t)}{N}\left(\widehat\probn+\probn\right)\left[g(\cdot)-\probn g(\cdot)\right]^2}\right\}\le2e^{-t},\\
&&\prob\left\{(\widehat\probn-\probn)g(\cdot)\le-\sqrt{\frac{2(1+t)}{N}\left(\widehat\probn+\probn\right)\left[g(\cdot)-\probn g(\cdot)\right]^2}\right\}\le 2e^{-t}.
\end{eqnarray*}
\end{lemma}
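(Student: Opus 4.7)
The plan is to specialize Theorem \ref{thm:panchenko} (Panchenko's inequality) to the unitary class $\alg:=\{g\}$ and then identify the resulting quantities. First, with $\alg=\{g\}$, the supremum is trivial and we have
$
\mathsf{S}=\sum_{j=1}^N g(\xi_j)=N\widehat\probn g(\cdot),
$
so $\mathsf{S}-\esp[\mathsf{S}]=N(\widehat\probn-\probn)g(\cdot)$. Next, I would compute $\widehat V$ explicitly. Because the ghost sample $\{\eta_j\}_{j=1}^N$ is independent of $\{\xi_j\}_{j=1}^N$ and i.i.d.\ with law $\probn$, conditioning on $\sigma(\xi_1,\ldots,\xi_N)$ reduces to a per-coordinate computation:
for each $j$, expanding the square gives
\[
\esp\bigl\{[g(\xi_j)-g(\eta_j)]^2\,\big|\,\sigma(\xi_1,\ldots,\xi_N)\bigr\}=g(\xi_j)^2-2\,g(\xi_j)\,\probn g(\cdot)+\probn g(\cdot)^2.
\]
Summing over $j$ and collecting the pieces (adding and subtracting $(\probn g(\cdot))^2$) yields the clean identity
\[
\widehat V=N(\widehat\probn+\probn)\bigl[g(\cdot)-\probn g(\cdot)\bigr]^2.
\]

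With these two identifications, I would substitute into the upper-tail inequality of Theorem \ref{thm:panchenko} and divide through by $N$ (respectively by the appropriate power of $N$ implicit in the normalization of $\mathsf{S}$) to obtain the stated threshold $\sqrt{\tfrac{2(1+t)}{N}(\widehat\probn+\probn)[g(\cdot)-\probn g(\cdot)]^2}$. The lower-tail statement then follows by applying the same argument to the function $-g$ in place of $g$, noting that $(\widehat\probn+\probn)[(-g)-\probn(-g)]^2=(\widehat\probn+\probn)[g-\probn g]^2$, so the self-normalizing term is invariant. The finiteness of $\widehat V$ is guaranteed by the assumption $\probn g(\cdot)^2<\infty$, which is all that is needed to invoke Panchenko.

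I do not foresee any real obstacle in this proof: the only nontrivial step is the algebraic simplification of the conditional second moment that produces the symmetric form $(\widehat\probn+\probn)[g-\probn g]^2$, and this is a two-line calculation. The rest is bookkeeping to rewrite the sum-form of Panchenko's inequality in the average-form required by the lemma.
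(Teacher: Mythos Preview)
Your proposal is correct and matches the paper's approach exactly: the paper states that the lemma ``is a direct consequence of Theorem \ref{thm:panchenko} applied to the unitary class $\alg:=\{g\}$'' with no further details, and your computation of $\widehat V=N(\widehat\probn+\probn)[g-\probn g]^2$ fills in the only nontrivial bookkeeping. A tiny remark: you need not replace $g$ by $-g$ for the lower tail, since Panchenko's inequality as stated in the paper already bounds both tails simultaneously.
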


Finally, we present the sub-gaussian tail of nonnegative random variables. 

\begin{lemma}[sub-Gaussian lower tail for nonnegative random variables]\label{lemma:sub-gaussian:lower:tail}
Let $\{Z_j\}_{j=1}^N$ be i.i.d. nonnegative random variables. Assume $a\in(1,2]$ and $0<\esp[Z_1^a]<\infty$. Then, for all $\epsilon>0$,
$$
\prob\left\{\frac{1}{N}\sum_{j=1}^NZ_j\le(1-\epsilon)\esp[Z_1]\right\}\le\exp\left\{-\left(\frac{a-1}{a}\right)\epsilon^{\frac{a-1}{a}}\left\{\frac{(\esp[Z_1])^a}{\esp[Z_1^a]}\right\}^{\frac{1}{a-1}}N\right\}.
$$
\end{lemma}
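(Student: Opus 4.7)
The proof should proceed via a standard exponential Markov (Chernoff) bound, tailored to nonnegative summands whose control we only have through the first and $a$-th moments. The starting point is the identity
$$
\prob\Bigl\{\tfrac{1}{N}\textstyle\sum_{j=1}^N Z_j \le (1-\epsilon)\esp Z_1\Bigr\}
\;\le\; e^{tN(1-\epsilon)\esp Z_1}\,\bigl(\esp e^{-tZ_1}\bigr)^N,
$$
valid for every $t>0$ by Markov applied to the nonnegative random variable $e^{-t\sum_j Z_j}$. The whole game then reduces to bounding the (moment-generating type) quantity $\esp e^{-tZ_1}$ in terms of $\mu:=\esp Z_1$ and $\mu_a:=\esp Z_1^a$ only.

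The essential auxiliary fact I would use is the pointwise inequality
$$
e^{-y} \;\le\; 1 - y + \frac{y^{a}}{a(a-1)}, \qquad y\ge 0,\ a\in(1,2].
$$
This is elementary calculus: set $\phi(y):=1-y+y^{a}/(a(a-1))-e^{-y}$, so that $\phi(0)=\phi'(0)=0$ and $\phi''(y)=y^{a-2}-e^{-y}$; for $a=2$ we have $\phi''(y)=1-e^{-y}\ge 0$, while for $a\in(1,2)$ the dominance $y^{a-2}\ge e^{-y}$ on $(0,\infty)$ follows from comparing $y^{2-a}$ and $e^{y}$. Integrating against the law of $Z_1$ (which is supported on $[0,\infty)$) gives $\esp e^{-tZ_1}\le 1 - t\mu + \frac{t^{a}\mu_a}{a(a-1)}$, and the elementary bound $1+x\le e^{x}$ yields
$$
\esp e^{-tZ_1} \;\le\; \exp\!\Bigl(-t\mu + \tfrac{t^{a}\mu_a}{a(a-1)}\Bigr).
$$

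Plugging this back into the Chernoff bound, the right-hand side simplifies to $\exp\bigl(-Nt\epsilon\mu + Nt^{a}\mu_a/(a(a-1))\bigr)$, a strictly convex function of $t>0$. Differentiating and solving produces the optimizer $t^{*}$ proportional to $(\epsilon\mu/\mu_a)^{1/(a-1)}$; substituting back collapses the exponent into a single term proportional to $N\,\epsilon^{a/(a-1)}(\mu^{a}/\mu_a)^{1/(a-1)}$ with prefactor $\frac{a-1}{a}$ (the usual ``complementary'' factor coming from a stationary point of $-x+x^{a}$-type functions), which yields the claimed tail. No step is genuinely hard: verifying the pointwise inequality uniformly in $a\in(1,2]$ is the only place requiring any care, and the Chernoff optimization is a one-line calculus exercise.
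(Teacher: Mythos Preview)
Your approach is essentially the paper's: Chernoff/Markov bound, a pointwise inequality of the form $e^{-y}\le 1-y+cy^{a}$, then $1+x\le e^{x}$, then optimize in the free parameter. The only substantive difference is the constant in your pointwise inequality. You use
\[
e^{-y}\le 1-y+\frac{y^{a}}{a(a-1)},
\]
whereas the paper uses the sharper
\[
e^{-y}\le 1-y+\frac{y^{a}}{a},\qquad y\ge 0,\ a\in(1,2].
\]
Both are true (yours follows from the paper's since $a(a-1)\le a$), but with your version the optimizer is $t^{*}=\bigl((a-1)\epsilon\mu/\mu_a\bigr)^{1/(a-1)}$ and the resulting exponent carries an extra factor $(a-1)^{1/(a-1)}$:
\[
-\frac{a-1}{a}\,(a-1)^{1/(a-1)}\,\epsilon^{a/(a-1)}\Bigl(\frac{\mu^{a}}{\mu_a}\Bigr)^{1/(a-1)}N.
\]
So your claim that the prefactor comes out exactly $\frac{a-1}{a}$ is not correct with the inequality you chose; to recover the stated constant you need the tighter $e^{-y}\le 1-y+y^{a}/a$ (which is also a short calculus check). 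Apart from this constant, the argument is identical. As a side remark, your exponent $\epsilon^{a/(a-1)}$ matches the paper's \emph{proof}; the $\epsilon^{(a-1)/a}$ in the displayed statement appears to be a typo.
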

\begin{proof}
Let $\theta,\epsilon>0$. By the usual ``Bernstein trick'', we get
\begin{eqnarray}
\prob\left\{\frac{1}{N}\sum_{j=1}^NZ_j\le(1-\epsilon)\esp[Z_1]\right\}&\le & 
\prob\left\{\sum_{j=1}^N(\esp[Z_i]-Z_i)\ge\epsilon\esp[Z_1]N\right\}\nonumber\\
&\le &\prob\left\{e^{\theta\sum_{j=1}^N(\esp[Z_i]-Z_i)}\ge e^{\theta\epsilon\esp[Z_1]N}\right\}\nonumber\\
&\le &e^{-\theta\epsilon\esp[Z_1]N}\esp\left[e^{\theta\sum_{j=1}^N(\esp[Z_i]-Z_i)}\right]\nonumber\\
&=&e^{-\theta\epsilon\esp[Z_1]N}\esp\left[e^{\theta(\esp[Z_1]-Z_1)}\right]^N.\label{equation:sub-gaussian:lower:tail:eq1}
\end{eqnarray}

It is a simple calculus exercise to show that 
$
\forall x\ge0,e^{-x}\le1-x+\frac{x^a}{a}.
$
Applying this with $x:=\theta Z_1$, we obtain 
\begin{eqnarray*}
\esp\left[e^{\theta(\esp[Z_1]-Z_1)}\right]&\le & 
e^{\theta\esp[Z_1]}\left(1-\esp[\theta Z_1]+\frac{\esp[(\theta Z_1)^a]}{a}\right)\\
&\le &e^{\theta\esp[Z_1]}e^{-\theta\esp[Z_1]+\frac{\esp[(\theta Z_1)^a]}{a}}=e^{\frac{\esp[(\theta Z_1)^a]}{a}},
\end{eqnarray*}
where the second inequality follows from the relation $1+x\le e^x$ for all $x\in\re$. We plug this back into \eqref{equation:sub-gaussian:lower:tail:eq1} and get, for all $\theta>0$,
\begin{eqnarray}\label{equation:sub-gaussian:lower:tail:eq2}
\prob\left\{\frac{1}{N}\sum_{j=1}^NZ_j\le(1-\epsilon)\esp[Z_1]\right\}&\le &
e^{\left(-\theta\epsilon\esp[Z_1]+\theta^a\frac{\esp[Z_1^a]}{a}\right)N}.
\end{eqnarray}
Since $a\in(1,2]$, we may actually minimize the above bound over $\theta>0$. The minimum is attained at 
$
\theta_*:=\left(\frac{\epsilon\esp[Z_1]}{\esp[Z_1^a]}\right)^{\frac{1}{a-1}}.
$
To finish the proof, we plug this in \eqref{equation:sub-gaussian:lower:tail:eq2} and notice that
$$
-\theta_*\epsilon\esp[Z_1]+\theta_*^a\frac{\esp[Z_1^a]}{a}=\left(-1+\frac{1}{a}\right)\frac{(\epsilon\esp[Z_1])^{\frac{a}{a-1}}}{\esp[Z_1^a]^{\frac{1}{a-1}}},
$$
using that $1+\frac{1}{a-1}=\frac{a}{a-1}$.\qed
\end{proof}

\end{document}